\DeclareSymbolFont{AMSb}{U}{msb}{m}{n} 
\DeclareMathSymbol{\N}{\mathbin}{AMSb}{"4E}
\DeclareMathSymbol{\Z}{\mathbin}{AMSb}{"5A}
\DeclareMathSymbol{\R}{\mathbin}{AMSb}{"52}
\DeclareMathSymbol{\Q}{\mathbin}{AMSb}{"51}
\DeclareMathSymbol{\I}{\mathbin}{AMSb}{"49}
\newtheorem{theorem}{Theorem}
\newtheorem{prop}{Proposition}
\newtheorem{corollary}{Corollary}
\newtheorem{lemma}{Lemma}
\newtheorem{remark}{Remark}
\newtheorem*{mainprop}{Proposition~\ref{mainprop}}
\newtheorem*{kobayashi}{Kobayashi's Theorem}
\newtheorem*{paredJSJ}{Characteristic Decomposition Theorem for Pared Manifolds}
\newtheorem*{paredThurston}{Thurston's Theorem for Pared Manifolds}
\newtheorem*{ikeda}{Ikeda's Theorem}
\theoremstyle{definition} 
\newtheorem{definition}[theorem]{Definition}
\newcommand{\bi}{\begin{itemize}}
\newcommand{\ei}{\end{itemize}}
\newcommand{\be}{\begin{enumerate}}
\newcommand{\ee}{\end{enumerate}}
\title{Intrinsic Chirality of Graphs in 3-manifolds}
  \date{\today}
\author[E. Flapan, H.N.\ Howards]{Erica Flapan, Hugh Howards}
    \subjclass{ 57M25, 57M15, 92E10, 05C10}
    \keywords{chiral, achiral, spatial graphs, 3-manifolds}
    \address{Department of Mathematics, Pomona College, Claremont, CA 91711, USA}
\address{Department of Mathematics, Wake Forest University, Winston-Salem, NC 27109, USA}
\thanks{The first author was supported in part by NSF Grant DMS-1607744}
\begin{document}

    \begin{abstract}The main result of this paper is that for every closed, connected, orientable, irreducible 3-manifold $M$, there is an integer $ n_M$ such that if $\gamma$ is a graph with no involution and a 3-connected minor $\lambda$ with $\mathrm{genus}(\lambda)>n_M$, then every embedding of $\gamma$ in $M$ is chiral. By contrast, the paper also proves that for every graph $\gamma$, there are infinitely many closed, connected, orientable, irreducible 3-manifolds $M$ such that some embedding of $\gamma$ in $M$ is pointwise fixed by an orientation reversing involution of $M$.\end{abstract}

    \maketitle

\section{Introduction}
 
	
	We say that a graph $\Gamma$ embedded in a $3$-manifold $M$ is {\it achiral}, if there is an orientation reversing homeomorphism $h$ of $M$ leaving $\Gamma$ setwise invariant.  If such an $h$ exists, we say that it is a homeomorphism of the pair $(M,\Gamma)$.  If no such homeomorphism exists, we say that the embedded graph $\Gamma$ is {\it chiral}.  
	
	We can think of a knot as an embedding of a circular graph with some number of vertices in $S^3$.  Independent of the number of vertices on the graph, some embeddings of it are chiral while others are achiral.  By contrast, there exist graphs which have the property that all of their embeddings in $S^3$ are chiral.  Such a graph is said to be {\it intrinsically chiral} in $S^3$ because its chirality depends only on the intrinsic structure of the graph and not on the extrinsic topology of an embedding of the graph in $S^3$.  The following theorem provides a method of constructing graphs that are intrinsically chiral in $S^3$.
	
\begin{theorem}
\cite{fl4}	
\label{IC} Every non-planar graph $\gamma$ with no order $2$ automorphism is intrinsically chiral in $S^3$.
\end{theorem}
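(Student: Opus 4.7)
The plan is to argue by contradiction: suppose $\gamma$ is non-planar, has no order $2$ automorphism, and that some embedding $\Gamma\subset S^{3}$ of $\gamma$ is achiral, so that the pair $(S^{3},\Gamma)$ admits an orientation-reversing self-homeomorphism $h$. The strategy is to extract from $h$ either a non-trivial order-$2$ automorphism of $\gamma$ or an embedding of $\gamma$ in $S^{2}$, each of which contradicts the hypotheses.

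The first, and by far the most delicate, step is to reduce to the case in which $h$ has finite order. The plan is to appeal to equivariant $3$-manifold techniques---for instance, results that realize a symmetry of $(S^{3},\Gamma)$ by a finite-order homeomorphism inducing the same automorphism of the abstract graph $\gamma$, available via Thurston's orbifold theorem or its precursors for symmetries of graphs in $S^{3}$. I expect this finite-order realization to be the main obstacle of the proof; the remainder is essentially algebraic and relies only on Smith theory.

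Once $h$ has finite order, consider the order $k$ of the induced automorphism $h_{\ast}\in\mathrm{Aut}(\gamma)$, which is finite because $h$ is. If $k$ is even then $(h_{\ast})^{k/2}$ is an order-$2$ automorphism of $\gamma$, contradicting the hypothesis. Otherwise $k$ is odd and $g:=h^{k}$ is an orientation-reversing (because $k$ is odd) finite-order homeomorphism whose induced action on $\gamma$ is trivial. In particular $g$ fixes every vertex and sends each edge to itself setwise; since a finite-order self-homeomorphism of an arc that fixes both endpoints must be the identity, $g$ fixes $\Gamma$ pointwise. Because $g$ is orientation-reversing and of finite order, its order is even, say $2c$, so $f:=g^{c}$ is an involution of $S^{3}$ whose fixed-point set contains $\Gamma$. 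By Smith theory, the fixed-point set of any $\mathbb{Z}/2$ action on $S^{3}$ is a $\mathbb{Z}/2$-homology sphere of dimension at most $2$, so in each of the possibilities $\varnothing$, $S^{0}$, $S^{1}$, $S^{2}$ the fixed set embeds in $S^{2}$. Hence $\Gamma\subset S^{2}$, making $\gamma$ planar---the desired contradiction.
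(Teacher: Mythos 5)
Your endgame is sound and is essentially the standard one: the induced automorphism has finite order $2^rq$ with $q$ odd, passing to the $q$-th power and using the absence of an order $2$ automorphism forces the induced automorphism to be trivial; a finite-order homeomorphism fixing both (distinct) endpoints of an edge is the identity on that edge, so one gets an orientation-reversing finite-order map fixing $\Gamma$ pointwise; an appropriate power is a non-trivial involution, and Smith theory then confines $\Gamma$ to a set homeomorphic to a subset of $S^2$, contradicting non-planarity. The paper only cites \cite{fl4} for Theorem~\ref{IC}, and this is exactly how \cite{fl4} and the present paper's generalization (Theorem~\ref{ICManifolds} via Proposition~\ref{mainprop}) finish.

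The genuine gap is your first step, which you correctly flag as the crux but then dispose of with an appeal that does not deliver it. The orbifold theorem and its precursors geometrize a \emph{given} finite group action; they do not convert a homeomorphism of the pair $(S^3,\Gamma)$ --- which can have infinite order even in the mapping class group of the pair --- into a finite-order homeomorphism inducing the same automorphism. A realization statement of that kind for the \emph{same} embedding fails in general; the correct ``rigidity'' results (the content of \cite{fl4} itself) require the graph to be $3$-connected and allow a change of embedding, hypotheses and concessions you cannot invoke here since Theorem~\ref{IC} assumes only non-planarity. The way the difficulty is actually handled is to reverse your order of operations: first take the odd power at the level of the induced automorphism (this needs no realization theorem, only finiteness of $\mathrm{Aut}(\gamma)$), obtaining an orientation-reversing homeomorphism $g$, a priori of infinite order, which fixes every vertex and hence, after an isotopy along the edges, fixes $\Gamma$ pointwise; the real work is then to show that a non-planar graph in $S^3$ cannot be pointwise fixed by an orientation-reversing homeomorphism of $S^3$. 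That statement is Jiang and Wang's theorem \cite{JW}, quoted in the proof of Proposition~\ref{mainprop}; in \cite{fl4}, which predates \cite{JW}, it is reached only after the heavy machinery that Section~4 of this paper generalizes (neighborhood of the graph, characteristic/pared JSJ decompositions, Thurston hyperbolization, Mostow rigidity, Waldhausen's isotopy theorem) to replace $g$ by a genuine involution before Smith theory applies. As written, your proposal assumes precisely this hard step.
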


Since chirality is defined for graphs embedded in any 3-manifold, it is natural to  ask whether a graph which is intrinsically chiral in $S^3$ would necessarily be intrinsically chiral in other 3-manifolds.  Our first result shows that no graph can be intrinsically chiral in every 3-manifold, or even in every ``nice'' $3$-manifold.   

\begin{theorem}   For every graph $\gamma$, there are infinitely many closed, connected, orientable, irreducible 3-manifolds $M$ such that some embedding of $\gamma$ in $M$ is pointwise fixed by an orientation reversing involution of $M$.
\end{theorem}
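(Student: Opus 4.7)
My plan is to realize $\gamma$ as a subset of the fixed-point set of an orientation-reversing involution on a 3-manifold of the form $F\times S^1$. First, I would invoke the classical fact that every finite graph embeds into some closed, connected, orientable surface; in particular, for every integer $g\ge\max\{1,\mathrm{genus}(\gamma)\}$, there is an embedding of $\gamma$ into the closed orientable surface $F_g$ of genus $g$. Fix such an embedding for each relevant $g$.

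Next, I would let $M_g=F_g\times S^1$ and define $\iota_g\colon M_g\to M_g$ by $\iota_g(x,\theta)=(x,-\theta)$, where $\theta$ denotes the standard angular coordinate on $S^1$. Each $M_g$ is closed, connected, and orientable. Because $g\ge 1$, the universal cover of $M_g$ is $\R^2\times\R=\R^3$, so $M_g$ is aspherical and in particular $\pi_2(M_g)=0$; combined with the fact that $\pi_1(M_g)=\pi_1(F_g)\times\Z$ is not a nontrivial free product, this forces $M_g$ to be prime and different from $S^2\times S^1$, hence irreducible. The map $\iota_g$ is the product of the identity on $F_g$ with the orientation-reversing involution $\theta\mapsto-\theta$ on $S^1$, so $\iota_g$ itself is an orientation-reversing involution of $M_g$. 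Its fixed-point set is $(F_g\times\{0\})\sqcup(F_g\times\{\pi\})$, and therefore the embedded copy of $\gamma$ sitting inside $F_g\times\{0\}$ is pointwise fixed by $\iota_g$.

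Finally, to obtain infinitely many distinct manifolds, I would note that $H_1(M_g;\Z)\cong\Z^{2g+1}$, whose rank depends on $g$, so the manifolds $M_g$ are pairwise non-homeomorphic as $g$ varies over the allowed range.

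The construction is largely formal, so I do not anticipate a serious obstacle. The two points that warrant care are the irreducibility of $F_g\times S^1$ for $g\ge 1$ (handled by asphericity together with the Sphere Theorem and the Kneser--Milnor decomposition to rule out an $S^2\times S^1$ summand) and the orientation-reversal of $\iota_g$ (immediate from the product structure, since reversing orientation on exactly one factor reverses orientation on the product).
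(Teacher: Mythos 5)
Your proof is correct, and it takes a genuinely different route from the paper's. You embed $\gamma$ in a closed orientable surface $F_g$ for every $g\ge\max\{1,\mathrm{genus}(\gamma)\}$, take $M_g=F_g\times S^1$ with the involution $(x,\theta)\mapsto(x,-\theta)$, get irreducibility from asphericity of $F_g\times S^1$ (together with the standard prime/not-$S^2\times S^1$ argument; one could equally just cite that these Seifert fibered spaces are irreducible), and distinguish the $M_g$ by $H_1(M_g;\Z)\cong\Z^{2g+1}$. The paper instead doubles a genus-$g$ handlebody along the surface $S$, so that the involution swapping the two handlebodies fixes $S$ pointwise, and then repairs the reducibility of this double by drilling out an invariant pair of disk-busting curves and gluing in complements of connected sums of knots along them; irreducibility and the incompressibility of $S$ and of the knot-complement boundaries are verified by innermost-disk arguments, and infinitely many non-homeomorphic manifolds are produced by increasing the number of knot summands and invoking Kneser--Haken finiteness to count disjoint non-parallel incompressible tori. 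Your construction is shorter and more elementary, and it completely proves the theorem as stated. What the paper's construction buys is the sharper homological control in its Proposition~\ref{prop:inf}: the manifolds $Q$ it builds satisfy $\mathrm{dim}_{\Z_2}(H_1(Q,\Z_2))=\mathrm{genus}(S)$, so the pointwise-fixed embeddings occur in manifolds whose $\Z_2$-homology dimension equals the genus of the surface, a pointed counterweight to the bound $n_M=\mathrm{dim}_{\Z_2}(H_1(M,\Z_2))+N_M$ in the main theorem; your manifolds have $\mathrm{dim}_{\Z_2}(H_1(M_g,\Z_2))=2g+1$, which is roughly twice as large, and they are all Seifert fibered rather than containing the rich collections of incompressible tori the paper exploits.
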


Theorem~\ref{theorem1} can be thought of as a generalization of the fact that every planar graph has an embedding in $S^3$ which is pointwise fixed by a reflection of $S^3$.

On the other hand, our main result is a generalization of Theorem \ref{IC}, which shows that for any given ``nice'' 3-manifold $M$, there are infinitely many graphs with no automorphism of order $2$ which are intrinsically chiral in $M$.  In particular, let $M$ be a closed, connected, orientable, irreducible 3-manifold.  Then by Kneser-Haken finiteness \cite{KH}, $M$ contains at most finitely many non-parallel disjoint incompressible tori.  Let $N_M$ denote either the maximal possible number of non-parallel disjoint incompressible tori in $M$ or 2, whichever is larger.  Now we define $n_M=\mathrm{dim} _{\mathbb{Z}_2} (H_1 (M,\mathbb{Z}_2))+N_M$.  We will refer to the constants $n_M$ and $N_M$ in the statements and proofs of Theorem~\ref{ICManifolds} and Proposition~\ref{mainprop}.
  
Finally, we make use of the following definitions from graph theory.  A graph $\gamma$ is said to be {\it $3$-connected}, if at least three vertices together with the edges containing them must be removed to disconnect $\gamma$ or reduce $\gamma$ to a single vertex.  A graph $\gamma_1$ is said to be a {\it minor} of a graph $\gamma_2$, if $\gamma_1$ can be obtained from $\gamma_2$ by deleting and/or contracting some number of edges.  Our main result is as follows.

\begin{theorem}\label{ICManifolds}  For every closed, connected, orientable, irreducible 3-manifold $M$, any graph with no order $2$ automorphism which has a 3-connected minor $\lambda$ with $\mathrm{genus}(\lambda)>n_M$ is intrinsically chiral in $M$.
\end{theorem}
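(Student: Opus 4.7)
The strategy is to suppose for contradiction that some embedding $\Gamma$ of $\gamma$ in $M$ admits an orientation-reversing self-homeomorphism $h$ of the pair $(M,\Gamma)$, and to derive a bound on the genus of the 3-connected minor $\lambda$ in terms of an invariant of $M$. Concretely, $n_M$ will be defined as the maximum genus of a connected component of the fixed set of an orientation-reversing involution of $M$.

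The first move is to arrange that $h$ fixes $\Gamma$ pointwise. The induced automorphism of $\gamma$ has some odd order $k$ in $\mathrm{Aut}(\gamma)$ (otherwise a power would be an order-$2$ automorphism, against the hypothesis), and $h^{k}$ is still orientation-reversing and now acts as the identity of $\gamma$. Next, using that $M$ is closed orientable and irreducible, one invokes a Nielsen-type realization for the pair $(M,\Gamma)$ --- appealing to Thurston's orbifold theorem together with finiteness results for symmetries of geometric pieces (Mostow rigidity in the hyperbolic case, classification of finite symmetries of Seifert-fibered pieces) --- to isotope $h^{k}$ to a finite-order orientation-reversing homeomorphism, still inducing the identity on $\gamma$; denote this homeomorphism by $h$ and its order by $n$. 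Because $h$ is finite-order, fixes each vertex of $\Gamma$, and sends each edge to itself, on every edge $e$ of $\Gamma$ the restriction $h|_{e}$ is a finite-order orientation-preserving self-homeomorphism of an interval fixing both endpoints; such a map is necessarily the identity, so $h$ fixes $\Gamma$ pointwise.

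Next, one extracts an orientation-reversing involution. Since $\gamma$ has a 3-connected minor of positive genus, $\Gamma$ has a vertex of valence at least $3$. The map $h^{n/2}$ is an involution fixing $\Gamma$ pointwise; if it were orientation-preserving (which happens precisely when $n\equiv 0 \pmod{4}$), its fixed set in the orientable $3$-manifold $M$ would be a $1$-manifold, incompatible with containing a vertex of valence at least $3$. Hence $n\equiv 2\pmod 4$ and $f:=h^{n/2}$ is an orientation-reversing involution of $M$ fixing $\Gamma$ pointwise. By Smith theory (or the standard slice theorem), $\mathrm{Fix}(f)$ is a closed two-sided orientable surface in $M$; let $F_{0}$ be the component containing $\Gamma$. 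Since any minor of $\gamma$ embeds in any surface in which $\gamma$ embeds, the 3-connected minor $\lambda$ embeds in $F_{0}$, whence $\mathrm{genus}(F_{0})\geq\mathrm{genus}(\lambda)>n_{M}$, contradicting the definition of $n_{M}$.

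The main obstacles are twofold. The first and more substantial is the finite-order realization of $h$ in the pair $(M,\Gamma)$: without this one cannot apply the interval argument to conclude pointwise fixing of $\Gamma$, nor usefully take half-powers of $h$. This step requires Nielsen-type realization for pairs and uses irreducibility of $M$ in an essential way. The second obstacle is the finiteness of $n_{M}$, which again rests on geometrization: for each fixed closed connected orientable irreducible $M$, orientation-reversing involutions of $M$ produce only finitely many diffeomorphism types of fixed-surface components, so the genera of those components are bounded. The 3-connectedness hypothesis on $\lambda$ plays an auxiliary but clarifying role --- guaranteeing that $\lambda$ has well-defined positive graph genus and a controllable embedding in $F_{0}$ --- and could potentially be refined, at the cost of altering the definition of $n_{M}$, using uniqueness results for embeddings of $3$-connected graphs in surfaces (Whitney and Robertson--Vitray).
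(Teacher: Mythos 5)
Your overall skeleton (pass to an odd power so the symmetry fixes every vertex, upgrade to a finite-order map, extract an orientation-reversing involution pointwise fixing the graph, and bound the genus by the genus of a fixed-surface component) matches the spirit of the paper, but the pivotal step is unjustified and is in fact the whole difficulty the paper is organized around. You assert that the orientation-reversing homeomorphism of the pair $(M,\Gamma)$ can be isotoped, within $M$, to a finite-order homeomorphism of $(M,\Gamma)$. There is no Nielsen-type realization theorem that does this for an arbitrary embedded graph: the relevant object is the mapping class of $g$ restricted to the graph exterior $\mathrm{cl}(M-N(\Gamma))$, and that class need not have finite order (it can differ from any finite-order map by Dehn twists along essential tori and annuli in the exterior), nor is the exterior hyperbolic in general. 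Even after passing to the JSJ/pared characteristic decomposition and using Thurston hyperbolization plus Mostow rigidity on the simple piece adjacent to $N(\Gamma)$ --- which is exactly what the paper does in Proposition~\ref{mainprop} --- one only obtains a finite-order isometry of that piece; to extend it to a closed manifold one must refill the boundary tori in a way compatible with the involution, and the resulting manifold $M'$ is in general \emph{not} $M$. This is why the paper's Proposition~\ref{mainprop} is stated as producing a possibly different manifold $M'$ with an involution pointwise fixing a re-embedding of the graph, together with the bound $\mathrm{dim}_{\mathbb{Z}_2}H_1(M',\mathbb{Z}_2)\leq n_M$; your argument, which keeps the involution on $M$ itself, skips precisely this construction, and without it the interval argument, the half-power trick, and the final genus comparison have nothing to act on.

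A second, related gap is your definition of $n_M$ as the maximal genus of a fixed-surface component of an orientation-reversing involution of $M$: you would need to prove this is finite (Kobayashi's theorem would do it for involutions of $M$), but more importantly it is the wrong quantity, because the involution you can actually produce lives on $M'$, not on $M$. The paper's choice $n_M=\mathrm{dim}_{\mathbb{Z}_2}(H_1(M,\mathbb{Z}_2))+N_M$, with $N_M$ coming from Kneser--Haken finiteness for incompressible tori in $M$, is designed exactly so that the homology of the refilled manifold $M'$ is controlled by data intrinsic to $M$ (via the Half Lives, Half Dies theorem and a count of which boundary tori of the relevant JSJ piece can be incompressible in $M$); Kobayashi's theorem is then applied to $M'$ to convert this into a bound on the genus of the fixed surface, hence of $\lambda$. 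So the fix is not cosmetic: you need either a genuine equivariant realization theorem for pairs $(M,\Gamma)$ (which is false in this generality) or the paper's detour through a new manifold with controlled $\mathbb{Z}_2$-homology.
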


For example, let $M$ be a closed, connected, orientable, irreducible Seifert fibered space whose base surface $S$ has genus $g\geq 2$.  Then by using a standard pants decomposition argument (see for example \cite{Ha2}), it can be shown that the closed surface $S$ contains at most $3g -3$ disjoint, non-parallel essential circles.  This implies that $M$ contains at most $3g-3$ disjoint, non-parallel incompressible vertical tori.  Since $M$ is  Seifert fibered and $g\geq 2$, all incompressible tori in $M$ are vertical.  Hence $M$ has at most $3g-3$ disjoint, non-parallel incompressible tori.  Now let $n_M=\mathrm{dim} _{\mathbb{Z}_2} (H_1 (M,\mathbb{Z}_2))+3g-3$.  Then by Theorem \ref{ICManifolds}, any graph with no $2$ automorphism which has a 3-connected minor $\lambda$ with $\mathrm{genus}(\lambda)>n_M$ is intrinsically chiral in $M$.

Note that by contrast with Theorem~\ref{ICManifolds} which is for $3$-manifolds without boundary, Ikeda  \cite{Ik} has shown in the theorem below that for ``nice'' 3-manifolds with aspherical boundary, any graph with large enough genus which has a certain type of  automorphism of order $2$  has an achiral hyperbolic embedding in the double of the manifold. 

\begin{ikeda} \cite{Ik}  Let $M$ be a compact, connected, orientable, 3-manifold with non-empty aspherical boundary.  Then there is an integer $n_M$ such that for any abstract graph $\lambda$ with $\mathrm{genus}(\lambda)>n_M$ and no vertices of valence 1, any automorphism of order $2$ of $\lambda$ that does not restrict to an orientation preserving automorphism of a cycle in $\lambda$ can be induced by an orientation reversing involution of some hyperbolic embedding of $\lambda$ in the double of $M$.\end{ikeda}

Since the word ``graph'' is used in different ways by different authors, we note here that in the current paper as well as in \cite{fl4} a {\it graph} is defined to be a connected $1$-complex consisting of a finite number of vertices and edges such that every edge has two distinct vertices and there is at most one edge with a given pair of vertices. 

 In Section 2, we prove Theorem~\ref{theorem1}.  In Section 3, we prove Theorem~\ref{ICManifolds} making use of a proposition, which we then prove in Section 4.  Throughout the paper we work in the smooth category.  

\bigskip

\section{Achiral embeddings}

The goal of this section is to prove Theorem~\ref{theorem1}.  To that end, we prove the following proposition.
Note that we use $\mathrm{dim}_{\mathbb{Z}}(H_1(M,\mathbb{Z}))$ to denote the rank of the first $\mathbb{Z}$-homology group of $M$ and we use $\mathrm{dim}_{\mathbb{Z}_2}(H_1(M,\mathbb{Z}_2))$ to denote the rank of the first $\mathbb{Z}_2$-homology group of $M$.

\setcounter{prop}{0}
\begin{prop}  Let $S$ be a closed, orientable surface. 
Then for infinitely many closed, connected, orientable, irreducible 3-manifolds $Q$ such that $\mathrm{dim}_{\Z_2} (H_1 (Q,\Z_2)) = \mathrm{genus}(S)$, there is an embedding of $S$ in $Q$ which is pointwise fixed by an orientation reversing involution of $Q$.
\label{prop:inf}
\end{prop}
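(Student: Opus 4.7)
My plan is to build the manifolds $Q$ via the \emph{double} construction. Given a compact orientable $3$-manifold $N$ with $\partial N = S$, form the closed oriented $3$-manifold $Q := DN = N \cup_{\mathrm{id}_S} N$. The swap involution $\tau \colon Q \to Q$ interchanging the two copies of $N$ fixes $S$ pointwise, and reverses orientation because the two copies of $N$ acquire opposite orientations in the oriented double. So it suffices to produce infinitely many compact orientable $N$ with $\partial N = S$ for which $Q = DN$ is irreducible and satisfies $\dim_{\Z_2} H_1(Q;\Z_2) = g$, where $g = \mathrm{genus}(S)$.

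I will require two properties of $N$. First, $N$ is irreducible and $\partial N$ is incompressible in $N$; then the standard innermost-disk argument on any embedded $2$-sphere in $Q$, made transverse to $S$, yields that $Q$ is irreducible. Second, $\dim_{\Z_2} H_1(N;\Z_2) = g$. Indeed, the Mayer--Vietoris sequence for $Q = N \cup_S N$ with $\Z_2$-coefficients gives
\[
H_1(Q;\Z_2) \;\cong\; \mathrm{coker}\!\bigl( H_1(S;\Z_2) \xrightarrow{(i_*,i_*)} H_1(N;\Z_2) \oplus H_1(N;\Z_2) \bigr),
\]
since the map $H_0(S;\Z_2) \to H_0(N;\Z_2)^2$ is injective. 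The half-lives-half-dies theorem (valid over $\Z_2$ for an orientable $3$-manifold) says that the image of $i_*$ has $\Z_2$-dimension $g$, so $\dim_{\Z_2} H_1(Q;\Z_2) = 2 \dim_{\Z_2} H_1(N;\Z_2) - g$, which equals $g$ precisely when $\dim_{\Z_2} H_1(N;\Z_2) = g$. Note that a handlebody $V_g$ already satisfies the homology condition but its double $\#_g(S^1 \times S^2)$ is reducible, so genuine modification is required.

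To construct such $N$ in the substantive case $g \geq 1$, I plan to take $N$ to be the complement in $S^3$ of a non-trivial genus-$g$ handlebody-knot whose complement has incompressible boundary. Since $N \subset S^3$, it is automatically irreducible, and by Alexander duality $H_1(N;\Z) \cong \Z^g$, giving $\dim_{\Z_2} H_1(N;\Z_2) = g$. Infinitely many such handlebody-knots exist in each genus: for $g=1$ one takes any infinite family of non-trivial prime knots, and for $g \geq 2$ one appeals to the existence of hyperbolic handlebody-knots with totally geodesic complementary boundary (or builds them by hyperbolic Dehn surgery on a fixed cusped example). The main obstacle I anticipate is showing that the resulting doubles $Q = DN$ are pairwise non-homeomorphic; for hyperbolic $N$ this follows from Mostow--Prasad rigidity, since the hyperbolic volume of $DN$ is a topological invariant and the family of handlebody-knots can be chosen to realize infinitely many distinct volumes.
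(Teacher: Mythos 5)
Your core construction is sound and is essentially an abstraction of the paper's own: the paper's manifold $Q$ is precisely the double of an irreducible $3$-manifold $N$ with incompressible boundary $S$ (there $N$ is a genus-$g$ handlebody with a neighborhood of a disk-busting curve removed and a knot exterior glued in so that Seifert surfaces replace meridian disks), with the same swap involution. Your Mayer--Vietoris plus Half Lives, Half Dies computation of $\dim_{\Z_2}H_1(Q;\Z_2)$ is a clean substitute for the paper's direct homology argument, and the points that a handlebody-knot exterior in $S^3$ is irreducible and has $H_1\cong\Z^g$ by Alexander duality are correct; for $g=1$ nontrivial knot exteriors do have incompressible boundary, while for $g\geq 2$ boundary-incompressibility is exactly what the paper's disk-busting curve is engineered to guarantee and is the property you must certify for your handlebody-knots. (Both you and the paper tacitly assume $g\geq 1$: for $S=S^2$ an orientation-reversing involution fixing $S$ pointwise in an irreducible $Q$ forces $Q\cong S^3$, so the genus-zero reading has to be excluded.)

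The genuine gap is the last step, producing infinitely many pairwise non-homeomorphic $Q$ for each fixed genus. For $g=1$ the mechanism you state fails outright: $DN$ contains the incompressible torus $S$, so it is not hyperbolic and has no hyperbolic volume, and Mostow--Prasad rigidity says nothing about it directly. This is repairable -- for hyperbolic knots the JSJ pieces of $DN$ are two copies of the knot exterior, so JSJ uniqueness (or additivity of the Gromov norm along incompressible tori) distinguishes the doubles -- but you have to say so. For $g\geq 2$ your argument rests on an unjustified existence claim: infinitely many genus-$g$ handlebody-knots in $S^3$ whose exteriors are hyperbolic with totally geodesic boundary and realize infinitely many distinct volumes; and your fallback of hyperbolic Dehn surgery on a cusped example takes you outside the class of exteriors of handlebody-knots in $S^3$, at which point you lose the two facts you got for free from $S^3$ (irreducibility and $H_1\cong \Z^g$) and would have to re-prove them. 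The paper sidesteps all of this: it glues in complements of connected sums of $n$ knots, exhibits $2n$ disjoint non-parallel incompressible follow-swallow tori, and invokes Kneser--Haken finiteness to conclude the manifolds are eventually distinct. That argument is uniform in $g$, requires no hyperbolic geometry, and would complete your proof as is, e.g.\ if you generate your family of $N$'s by tying connected sums of more and more knots into a single boundary-incompressible example rather than by varying hyperbolic structures.
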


The proof of Proposition~\ref{prop:inf} will make use of the idea of a {\em disk-busting} curve, which is a simple closed curve in a handlebody that intersects every essential, properly embedded disk in the handlebody.   For example, a core of a solid torus is disk busting in the solid torus.  For a genus 2 handlebody whose fundamental group is generated by $a$ and $b$, an example of a disk-busting curve is one that includes into the fundamental group of the handlebody as $abab^{-1}$ (see Figure~\ref{diskbust}).  All handlebodies have disk-busting curves and Richard Stong \cite{S} gives an algorithm to recognize them.  For more on disk-busting curves see  \cite{H2} or  \cite{S}.

\begin{figure}[here]
\begin{center}
\includegraphics[width=.4\textwidth]{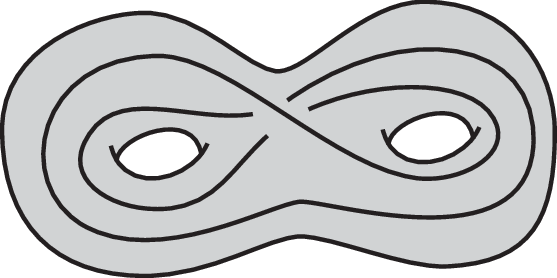}
\caption{A disk-busting curve in a genus 2 handlebody.}
\label{diskbust}
\end{center}
\end{figure}

\begin{proof} Let $g$ be the genus of a closed orientable surface $S$.  Let $M$ be the manifold obtained by gluing genus $g$ handlebodies $V_1$ and $V_2$ together along $S$ in such a way that there is an orientation reversing involution $h$ interchanging $V_1$ and $V_2$ which pointwise fixes the surface $S$.  Then $M$ is a closed, connected, orientable 3-manifold $M$ such that $\mathrm{dim}_{\Z_2} (H_1 (M,\Z_2)) = \mathrm{genus}(S)$.  However, $M$ is reducible.

In order to create an irreducible 3-manifold from $M$, we first remove neighborhoods $N_1$ and $N_2$ of identical disk busting curves $C_1$ and $C_2$ in the interiors of the handlebodies $V_1$ and $V_2$ such that $N_1$ and $N_2$ are interchanged by the involution $h$.
Note that since $ \mathrm{cl}(M-(S \cup  N_1 \cup  N_2))$ consists of two identical handlebodies from which neighborhoods of disk busting curves have been removed, the inclusion of $S$ into each component of $ \mathrm{cl}(M-(S \cup  N_1 \cup  N_2))$ is incompressible.

 We prove by contradiction that  $ \mathrm{cl}(V_i- N_i) \subset \mathrm{cl}(M-(S \cup  N_1 \cup  N_2))$ is irreducible.
Assume $ \mathrm{cl}(V_i- N_i)$ is reducible.  Let $F$ be a sphere in  $ \mathrm{cl}(V_i- N_i) $ that doesn't bound a ball. $F$ must separate  $S$ from $\partial N_i$ 
since every sphere in the handlebody $V_i$ bounds a ball in $V_i$.  Now take any compressing disk $D$ for $S = \partial V_i$ in the handlebody $V_i$ chosen so that it intersects $F$ minimally.  Since $F$ is a sphere, an innermost loop argument implies $D \cap F = \emptyset$.  This, however, implies that $D$ is disjoint from $\partial N_i$ and therefore that $S$ is compressible in $ \mathrm{cl}(V_i- N_i)$ which contradicts the conclusion above so $F$ must not exist and $ \mathrm{cl}(V_i- N_i)$ is irreducible.

Now we sew in identical non-trivial knot complements $Q_1$ and $Q_2$ 
to  $ \mathrm{cl}(M-(N_1 \cup  N_2))$
along $\partial N_1$ and $\partial N_2$ respectively so that the Seifert surfaces of the $Q_i$ are glued in where the meridians of the $N_i$ were.

Let $Q$ denote the 3-manifold obtained in this way.  Then the restriction $ h| \mathrm{cl}(M-( N_1 \cup  N_2))$ can be extended to an orientation reversing involution of $Q$ that pointwise fixes $S$.  Note that the components of $\mathrm{cl}(Q-(S \cup \partial Q_1 \cup  \partial Q_2))$
 that contain $S$ are homeomorphic to the corresponding components of $\mathrm{cl}(M-(S \cup  \partial N_1 \cup  \partial N_2))$ that contain $S$.
\medskip

\noindent{\bf Claim 1:}  The surfaces $S$, $\partial Q_1$, and $\partial Q_2$ are each incompressible in $Q$

\medskip

\noindent{\bf Proof of Claim 1:} Assume one of $S$, $\partial Q_1$, or $\partial Q_2$ is compressible in $Q$.  Then there is a compressing disk $D$ for one of these surfaces whose interior meets $S \cup \partial Q_1 \cup \partial Q_2$ transversally in a minimal number of components.  If the interior of $D$ intersects one of the surfaces $S$, $\partial Q_1$, or $\partial Q_2$, then an innermost loop on $D$ bounds a compressing disk $\Delta$ for that surface whose interior is disjoint from the other surfaces.  Now since each $Q_i$ is a knot complement, $\Delta$ cannot be contained in $Q_i$.  Thus $\Delta$ must be contained in some component of $\mathrm{cl}(Q-(S \cup  Q_1 \cup  Q_2))$.  Also, since the $C_i$ are disk busting for  the $V_i$, $\partial \Delta$ cannot be contained in $S$.  So $\Delta$ is a compressing disk for one of the $\partial N_i$ in $\mathrm{cl}(V_i-N_i)$.  By compressing along $\Delta$ we obtain a sphere $F$ in $\mathrm{cl}(V_i-N_i)$ separating $S$ from $\partial N_i$.  $F$ then is a reducing sphere for $\mathrm{cl}(V_i-N_i)$, but this contradicts the fact proven above that $\mathrm{cl}(V_i-N_i)$ is irreducible.
\bigskip

\noindent{\bf Claim 2:}  $Q$ is irreducible.  

\medskip

\noindent{\bf Proof of Claim 2:} Let $F$ be a sphere in $Q$ and assume without loss of generality that $F$ intersects $S \cup \partial Q_1 \cup \partial Q_2$ transversally in a minimal number of components.  If $F$ intersects any of $\partial Q_1$, $\partial Q_2$, or $S$, then there is an innermost loop on $F$ bounding a disk $D$ that is a compressing disk for $\partial Q_1$, $\partial Q_2$, or $S$ in  $ \mathrm{cl}(Q-(S \cup \partial Q_1 \cup \partial Q_2))$. But this violates Claim 1.  Thus $F$ is contained in some $Q_i$ or $\mathrm{cl}(V_i-Q_i)$.  However, since each $Q_i$ is a knot complement it is irreducible, and we saw above that $\mathrm{cl}(V_i-N_i)=\mathrm{cl}(V_i-Q_i)$ is irreducible.  Therefore $F$ bounds a ball in $Q$. 
\bigskip

Now, recall that $\mathrm{dim}_{\Z_2} (H_1 (M,\Z_2)) = \mathrm{genus}(S)$.  Also, it can be seen using a Mayer-Vietoris sequence that replacing the handlebodies $N_1$ and $N_2$ by the knot complements $Q_1$ and $Q_2$ does not change the first homology, since a meridional disk of $N_i$ is replaced by a Seifert surface in $Q_i$.  Thus $\mathrm{dim}_{\Z_2} (H_1 (Q,\Z_2)) = \mathrm{genus}(S)$, and hence $Q$ has the properties required by the proposition.

In order to prove that we can find infinitely many such 3-manifolds $Q$, we begin by letting $X_1=Q$.  Now it follows from Kneser-Haken finiteness, that since $X_1$ is compact and orientable, there is a finite constant $t_1$ such that $X_1$ cannot contain more than $t_1$ disjoint closed, non-parallel, incompressible surfaces (for example, see Proposition 1.7 in \cite{Ha} or see \cite{KH}).

Now let $n>t_1$, let $K_1$, \dots, $K_n$ be distinct non-trivial knots, and let $D$ denote a disk with $n$ holes.  Then $D\times S^1$ has $n+1$ torus boundary components which we denote by $S_1$, $S_2$, \dots, $S_n$, and $T_1$.  Now for each $j=1$, \dots $n$, we glue the knot complement $R_j=\mathrm{cl}(S^3-N(K_j))$ to the torus $S_j$ so that the boundary of a Seifert surface for $R_j$ is attached to $S_j\cap D$.  Then $Y_1=(D\times S^1)\cup R_1\dots \cup R_n$ is the complement of the connected sum $K_1 \# K_2 \# \dots \# K_n$, the $S_j$ are pairwise disjoint, non-parallel, incompressible tori in $Y_1$, and $\partial Y_1=T_1$.  Let $Y_1'$ denote a copy of $Y_1$.  Now we replace $Q_1$ by $Y_1$ and $Q_2$ by $Y_1'$ glued in such a way that the closed manifold $X_2$ obtained has an orientation reversing involution which interchanges $Y_1$ and $Y_1'$ pointwise fixing $S$.

 
Thus $X_2$ contains two copies of the tori $S_1$, $S_2$, \dots, $S_n$.  To see that these $2n$ tori are incompressible in $X_2$, suppose there is a compressing disk for one of the tori that intersects $\partial Y_1 \cup \partial Y_1'\cup  S$ transversally in a minimal number of components. An innermost loop on the disk would be a compressing disk for $\partial Y_1$, $\partial Y_1'$, or $S$, again contradicting Claim 1.  Thus the torus would have to be compressible in $Y_1$ or $Y_1'$.  But this is impossible by our construction of $Y_1$. Thus the $2n$ tori together with $\partial Y_1$ and $\partial Y_1'$ must all be incompressible in $X_2$.

It follows that $X_2$ contains at least $2n+2>t_1$ disjoint, non-parallel, incompressible tori, and thus $X_2$ is distinct from $X_1$.  By repeating this process, we can create an infinite sequence of such manifolds each containing more disjoint, non-parallel, incompressible tori than the previous manifold did.  \end{proof}

\bigskip

\begin{remark}  One of the referees observed that an alternate proof of Proposition 1 can be obtained using techniques of hyperbolic Dehn surgery together with results from \cite{QW}.  We prefer to maintain our original proof because our construction is explicit.\end{remark}
\bigskip

Since every graph embeds in a closed orientable surface, the following theorem is an immediate consequence of Proposition~\ref{prop:inf}.

\setcounter{theorem}{1}
\begin{theorem}  \label{theorem1} For every graph $\gamma$, there are infinitely many closed, connected, orientable, irreducible 3-manifolds $M$ such that some embedding of $\gamma$ in $M$ is pointwise fixed by an orientation reversing involution of $M$.

\end{theorem}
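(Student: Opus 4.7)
The plan is to reduce Theorem~\ref{theorem1} to Proposition~\ref{prop:inf} by means of the classical fact that every finite graph embeds in some closed, connected, orientable surface. Given an arbitrary graph $\gamma$, I would first produce such a surface $S$ containing $\gamma$; concretely, one may take a regular planar projection of $\gamma$, thicken it to an oriented ribbon neighborhood, and adjoin a small handle at each crossing so that the two strands pass over disjoint disks. Capping off the resulting boundary components (or, equivalently, appealing to the fact that every graph has a finite orientable genus) yields a closed, connected, orientable surface $S$ in which $\gamma$ is embedded.

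With $S$ in hand, I would apply Proposition~\ref{prop:inf} to obtain infinitely many pairwise non-homeomorphic closed, connected, orientable, irreducible 3-manifolds $Q$, together with embeddings of $S$ into each $Q$ and orientation reversing involutions $h_Q \colon Q \to Q$ that pointwise fix $S$. Since $\gamma \subseteq S$, the involution $h_Q$ automatically pointwise fixes the embedded copy of $\gamma$, so composing the embedding of $S$ with the inclusion $\gamma \hookrightarrow S$ produces the required achiral embedding of $\gamma$ in $Q$. The infinitely many distinct manifolds demanded by the theorem are precisely those supplied by Proposition~\ref{prop:inf}.

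There is essentially no obstacle once Proposition~\ref{prop:inf} is available. The genuine technical work --- doubling a handlebody across $S$, excising neighborhoods of disk busting curves, filling with knot complements to force irreducibility while preserving both the involution and the incompressibility of $S$, and counting swallow-follow tori to distinguish infinitely many $Q$'s --- has already been carried out inside the proposition. The role of the present theorem is only to observe that the intermediate surface $S$ can serve as a carrier for any abstract graph $\gamma$, so the achiral embeddings of $S$ yield achiral embeddings of $\gamma$ with no additional geometric input required.
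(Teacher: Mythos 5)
Your proposal is correct and follows the paper's own route exactly: the paper likewise deduces Theorem~\ref{theorem1} immediately from Proposition~\ref{prop:inf} by noting that every graph embeds in a closed orientable surface, so the involution fixing $S$ pointwise fixes the embedded copy of $\gamma$. The extra sketch of how to construct such a surface is standard and harmless.
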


\bigskip

\section{Intrinsic Chirality}

The goal of this section is to prove our main result.  We begin with the following definition.

\begin{definition} We define the {\it genus of a closed surface} $S$ by $$\frac{2-\chi(S)}{2}=\frac{\mathrm{dim}_{\mathbb{Z}_2} (H_1 (S,\mathbb{Z}_2))}{2}.$$

Then the {\it genus of a graph} is defined as the minimum genus of any closed (orientable or non-orientable) surface in which the graph embeds.

\end{definition}

  It is worth pointing out that the genus of a non-orientable surface is not consistently defined in the literature.  Some papers use our definition, while others define the projective plane as having genus 1 instead of $\frac{1}{2}$.  
  \medskip

\begin{mainprop} Let $\gamma$ be a 3-connected graph with genus at least 2, and let $\Gamma$ be an embedding of $\gamma$  in a closed, connected, orientable, irreducible 3-manifold $M$ such that $(M,\Gamma)$ has an orientation reversing homeomorphism fixing every vertex of $\Gamma$.

Then there is an embedding $\Gamma'$ of $\gamma$ in a closed, connected, orientable 3-manifold $M'$ such that $(M',\Gamma')$ has an orientation reversing involution pointwise fixing $\Gamma'$ and $\mathrm{dim} _{\mathbb{Z}_2} (H_1 (M',\mathbb{Z}_2))\leq n_M$.
  \end{mainprop}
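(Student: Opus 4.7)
The plan is to first reduce to the case where $h$ pointwise fixes $\Gamma$ and then to construct $(M', \Gamma', \tau)$ as a symmetrized ``partial double'' of the exterior of $\Gamma$ in $M$. For the reduction, I would use 3-connectedness: the paper's convention permits at most one edge between any pair of vertices, and $h$ fixes every vertex, so $h$ must send each edge of $\Gamma$ to itself setwise. On a single edge $h$ restricts to a self-homeomorphism of a closed interval fixing both endpoints, which is isotopic to the identity. An ambient isotopy supported in a thin regular neighborhood of $\Gamma$ therefore produces a new orientation reversing homeomorphism, still called $h$, with $h|_\Gamma = \mathrm{id}_\Gamma$.

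Next I would set up the geometric picture. Let $V$ be a regular neighborhood of $\Gamma$ in $M$, let $E = \mathrm{cl}(M - V)$, and let $S = \partial V$; after further isotopy $h$ preserves each of $V$, $E$, and $S$. Since $h$ is orientation reversing on the handlebody $V$ and pointwise fixes its spine, standard local linearization near $\Gamma$ lets me modify $h$ rel $\Gamma$ so that $h|_V$ is a genuine orientation reversing involution whose fixed set is a properly embedded compact orientable surface $F_V \subset V$ with $\Gamma \subset F_V$ and $\partial F_V \subset S$; concretely $V$ can be identified with $F_V \times [-1,1]$ so that $h|_V$ is reflection in the second factor. The $1$-manifold $\partial F_V$ divides $S$ into two subsurfaces $S^+$ and $S^-$ interchanged by $h|_S$.

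I would then build $M'$ by taking two copies $E_+, E_-$ of $E$, gluing $V$ to $E_\pm$ along $S^\pm$ via the identity, and gluing the remaining boundary piece $S^\mp \subset \partial E_\pm$ of one copy to that of the other via $h|_S$. The map $\tau$ defined to be $h|_V$ on $V$, the homeomorphism $h|_E$ mapping $E_+$ to $E_-$, and $(h|_E)^{-1}$ mapping $E_-$ to $E_+$, is a well-defined orientation reversing involution of the closed orientable 3-manifold $M'$ whose fixed set contains $F_V \supset \Gamma' := \Gamma$, giving the desired embedding.

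The main obstacle will be bounding $\mathrm{dim}_{\mathbb{Z}_2} H_1(M', \mathbb{Z}_2)$ by $n_M = \mathrm{dim}_{\mathbb{Z}_2} H_1(M, \mathbb{Z}_2) + N_M$. Direct Mayer--Vietoris computations express $\mathrm{dim}\, H_1(M)$ and $\mathrm{dim}\, H_1(M')$ in terms of $\mathrm{dim}\, H_1(E)$ and the image of $H_1(S)$ in $H_1(E)$; naive subtraction yields an excess roughly of size $\mathrm{dim}\, H_1(E) - \mathrm{rank}(H_1(S)\to H_1(E))$, which in general is not controlled by $\mathrm{dim}\, H_1(M)$ alone. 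To absorb this excess into $N_M$, I would first replace $S$ by a maximal system of disjoint non-parallel incompressible surfaces in $E$ obtained by iterated compression, and then invoke Kneser--Haken finiteness together with irreducibility of $M$ to bound the number of such surfaces by the maximal number of disjoint non-parallel incompressible tori in $M$, which is at most $N_M$. Carrying out this bookkeeping, while checking that the compressions can be performed equivariantly so that the symmetrized double construction still yields the desired involution on the simplified exterior, is where the real work of the proof lies.
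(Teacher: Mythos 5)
Your construction has two genuine gaps, and together they are fatal as written. First, the ``standard local linearization'' step is not a minor reduction but the heart of the problem. The given homeomorphism $g$ can be isotoped to fix $\Gamma$ pointwise, but on $V=N(\Gamma)$ it may differ from a product-type reflection by twists along meridian disks and essential annuli of the handlebody: for example, on an edge tube, reflection composed with a single meridional twist is an orientation reversing involution whose fixed set is a M\"obius band containing the edge, and its restriction to the boundary is not isotopic to that of any reflection with orientable fixed surface, so the splitting $S=S^+\cup S^-$ you need may simply not exist. More fundamentally, for $\tau$ to be well defined on $V\cup E_+\cup E_-$ you need the boundary involution $\rho|_S$ of the standard reflection of $V\cong F_V\times[-1,1]$ to extend to a homeomorphism of the exterior $E$; the hypothesis only provides that $h|_S$ extends, and upgrading $h|_S$ to such an involution is exactly the difficulty. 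The paper cannot do this by local modifications either: it cuts $\mathrm{cl}(M-N(\Gamma))$ along its JSJ tori, passes to the simple pared piece adjacent to $N(\Gamma)$, and uses Thurston's hyperbolization for pared manifolds together with Mostow rigidity and Waldhausen's isotopy theorem to replace $g$ (which need not have finite order) by an isometric involution there, and only then extends over balls and $N(V)$ to fix a re-embedded copy $\Gamma'$.

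Second, the homology bound, which you acknowledge as ``the real work,'' cannot be salvaged within your construction. Your $M'$ contains two copies of the entire exterior $E$, and the $\mathbb{Z}_2$-homology of such a partial double grows with the genus of $\Gamma$ (already the double of a graph exterior in $S^3$ has first homology of rank equal to the genus of $N(\Gamma)$), whereas $n_M=\dim_{\mathbb{Z}_2}H_1(M,\mathbb{Z}_2)+N_M$ depends only on $M$. The proposed repair via compressing $S$ to incompressible surfaces and citing Kneser--Haken does not connect to $N_M$: the constant $N_M$ counts incompressible \emph{tori}, the surfaces produced by compression need not be tori, and bounding their number does not bound the homology your doubling has created; the equivariance of the compressions is also unaddressed. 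The paper avoids all of this by discarding everything outside the JSJ piece $X$ adjacent to $N(\Gamma)$ and equivariantly Dehn-filling the resulting torus boundary components along slopes chosen (via the Half Lives, Half Dies corollary) to kill $\mathbb{Z}_2$-homology; the inequality then follows because at most $N_M$ of those boundary tori are incompressible in $M$ and each contributes at most one new $\mathbb{Z}_2$-class. Without an analogous mechanism that keeps only one controlled piece of the exterior, your $M'$ will in general violate the required inequality.
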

\medskip

The point of this proposition is that if we have an embedding $\Gamma$ of a graph $\gamma$ in a manifold $M$ such that $(M, \Gamma)$ has an orientation reversing {\it homeomorphism}, then we can find a (possibly different) manifold $M'$ and an embedding $\Gamma'$ of $\gamma$ in $M'$ such that $(M',\Gamma')$ has an orientation reversing {\it involution}.  Furthermore, even though $M'$ might be homologically more complicated than $M$, there is a bound on $\mathrm{dim} _{\mathbb{Z}_2} (H_1 (M',\mathbb{Z}_2))$ which depends only on $M$ and not on the graph $\gamma$ or the particular embedding of $\gamma$ in $M$.  

Note that the orientation reversing homeomorphism in the hypothesis of Proposition~\ref{mainprop}, leaves each edge  of $\Gamma$ setwise invariant since, according to our definition of a graph, there is at most one edge between any pair of vertices.  While this homeomorphism does not necessarily pointwise fix $\Gamma$, it is isotopic to a homeomorphism which does fix $\Gamma$ pointwise.

Proposition~\ref{mainprop} will be proved in the next section.  We now prove Theorem~\ref{ICManifolds} (restated below) by making use of Proposition~\ref{mainprop} together with the following result of Kobayashi.

\begin{kobayashi}\cite{Ko}  Let $X$ be a closed, orientable, 3-manifold admitting an orientation reversing involution $h$.  Then
 $$ \mathrm{dim}_{\mathbb{Z}_2} (H_1 (\mathrm{fix}(h),\mathbb{Z}_2)) \leq \mathrm{dim} _{\mathbb{Z}_2} (H_1 (X,\mathbb{Z}_2)) + \mathrm{dim} _{\mathbb{Z}} (H_1 (X,\mathbb{Z})).$$
  \end{kobayashi}
  \bigskip
 
\setcounter{theorem}{2}
\begin{theorem}   For every closed, connected, orientable, irreducible 3-manifold $M$, there is an integer $ n_M$ such that any abstract graph with no order $2$ automorphism which has a 3-connected minor $\lambda$ with $\mathrm{genus}(\lambda)>n_M$ is intrinsically chiral in $M$.\end{theorem}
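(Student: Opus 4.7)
The plan is to argue by contradiction, combining Proposition~\ref{mainprop} with Kobayashi's Theorem. With $n_M=\dim_{\mathbb{Z}_2}(H_1(M,\mathbb{Z}_2))+N_M$ as defined in the paper, suppose $\gamma$ has no automorphism of order $2$, has a $3$-connected minor $\lambda$ with $\mathrm{genus}(\lambda)>n_M$, and admits an achiral embedding $\Gamma\subset M$ realized by an orientation reversing homeomorphism $h$ of $(M,\Gamma)$.

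First I would promote $h$ to a symmetry that fixes every vertex of $\Gamma$. Since $h$ induces an automorphism $\phi$ of the finite graph $\gamma$, and a finite group with no element of order $2$ has only elements of odd order (an element of even order $2k$ has its $k$-th power of order $2$), $\phi$ has some odd order $n$. Thus $h^n$ is still orientation reversing and induces the identity on $\gamma$, so it fixes every vertex of $\Gamma$ and setwise preserves every edge; by the remark following Proposition~\ref{mainprop} we may further isotope $h^n$ so that it pointwise fixes $\Gamma$.

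Next I would descend to an embedding of the $3$-connected minor $\lambda$. Fix a deletion/contraction sequence in $\gamma$ realizing $\lambda$ as a minor, and carry it out in $\Gamma$ to produce an embedding $\Lambda$ of $\lambda$ in $M$: deletions simply forget arcs, while each contraction chooses a point of the arc being contracted (say its midpoint) as the new vertex and reroutes incident edges along the two halves of the arc. Because $h^n$ acts as the identity on $\Gamma$, and hence on every contracted arc, it restricts to an orientation reversing homeomorphism of $(M,\Lambda)$ pointwise fixing every vertex of $\Lambda$. Since $\lambda$ is $3$-connected and $\mathrm{genus}(\lambda)>n_M\geq 2$, Proposition~\ref{mainprop} now produces an embedding $\Lambda'$ of $\lambda$ in some closed, connected, orientable $3$-manifold $M'$ with an orientation reversing involution $h'$ pointwise fixing $\Lambda'$ and $\dim_{\mathbb{Z}_2}(H_1(M',\mathbb{Z}_2))\leq n_M$.

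Finally I would extract a contradiction by comparing genera. The fixed point set $F=\mathrm{fix}(h')$ of the orientation reversing involution $h'$ of the orientable $3$-manifold $M'$ is a disjoint union of closed orientable surfaces and isolated points; since $\Lambda'$ is a connected $1$-complex it lies in a single surface component $F_0$, so $\lambda$ embeds in $F_0$ and
$$\mathrm{genus}(\lambda)\leq\mathrm{genus}(F_0)\leq\frac{\dim_{\mathbb{Z}_2}(H_1(F,\mathbb{Z}_2))}{2}.$$
Kobayashi's Theorem applied to $M'$, together with the universal-coefficients bound $\dim_{\mathbb{Z}}(H_1(M',\mathbb{Z}))\leq\dim_{\mathbb{Z}_2}(H_1(M',\mathbb{Z}_2))$, then gives
$$\dim_{\mathbb{Z}_2}(H_1(F,\mathbb{Z}_2))\leq 2\dim_{\mathbb{Z}_2}(H_1(M',\mathbb{Z}_2))\leq 2n_M,$$
so $\mathrm{genus}(\lambda)\leq n_M$, contradicting our assumption. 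The step I expect to require the most care is the minor construction in the third paragraph: one must carry out the deletion-contraction sequence so as to preserve equivariance under $h^n$ without introducing loops or multi-edges that would violate the paper's convention on graphs, which can always be arranged by choosing the sequence and the contraction points appropriately.
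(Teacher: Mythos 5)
Your proposal is correct and follows essentially the same route as the paper: reduce to an orientation reversing homeomorphism fixing every vertex by taking an odd power, pass to an embedding of the $3$-connected minor, invoke Proposition~\ref{mainprop}, and then bound $\mathrm{genus}(\lambda)$ by the genus of the fixed surface via Kobayashi's Theorem. The only quibble is your side claim that the surface components of $\mathrm{fix}(h')$ are orientable, which is neither needed nor true in general for an orientation reversing involution of an orientable $3$-manifold; since the paper's definition of graph genus allows non-orientable closed surfaces, the inequality $\mathrm{genus}(\lambda)\leq\mathrm{genus}(F_0)$ holds regardless.
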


\begin{proof}  Let $\gamma$ be a graph with no order $2$ automorphism.  Suppose for the sake of contradiction that $\gamma$ has an achiral embedding $\Gamma$ in the manifold $M$.  Let $n_M$ and $N_M$ be the constants associated with $M$ that were defined in Section~1.  Let $\lambda$ be a 3-connected minor of $\gamma$.  We will now show that $\lambda$ satisfies the inequality 

$$\mathrm{genus}(\lambda)\leq \mathrm{dim} _{\mathbb{Z}_2} (H_1 (M,\mathbb{Z}_2))+N_M=n_M.$$

\smallskip
 First observe that if $\mathrm{genus}(\lambda)<2$, then the above inequality is immediate since $N_M\geq 2$.  Thus we assume that $\mathrm{genus}(\lambda)\geq 2$.

Since $\Gamma$ is an achiral embedding of $\gamma$ in $M$, there is an orientation reversing homeomorphism $f$ of the pair $(M,\Gamma)$.  Let $\varphi$ denote the automorphism that $f$ induces on the graph $\Gamma$.  Note that even though $f$ does not necessarily have finite order, $\varphi$ has finite order because $\Gamma$ has a finite number of vertices.  Hence we can express the order of $\varphi$ as $2^rq$, where $r\geq 0$ and $q$ is odd.  Now since $f$ is orientation reversing and $q$ is odd, $g=f^{q}$ is an orientation reversing homeomorphism of $(M,\Gamma)$.  Also,  $g$ induces the automorphism $\varphi^q$ on $\Gamma$ and $\mathrm{order}(\varphi^q)=2^r$.  In particular, $g^{2^r}$ fixes every vertex of $\Gamma$.

If $r\geq 1$, then $g^{2^{r-1}}$ would induce an order two automorphism on $\Gamma$.  As we assumed that no such automorphism exists, we must have $r=0$.  Thus $g=g^{2^r}$ is an orientation reversing homeomorphism of $(M,\Gamma)$ which fixes every vertex of $\Gamma$.  

Since $\lambda$ is a  minor of the abstract graph $\gamma$, by deleting and/or contracting some edges of the embedding $\Gamma$ of $\gamma$ in $M$, we obtain an embedding $\Lambda$ of $\lambda$ in $M$.  Furthermore, by composing the homeomorphism $g$ with an isotopy in a neighborhood of each edge that was contracted, we obtain an orientation reversing homeomorphism of $(M,\Lambda)$ which fixes every vertex of $\Lambda$. 

Since $\lambda$ is a 3-connected graph with $\mathrm{genus}(\lambda)\geq 2$, we can now apply Proposition~\ref{mainprop} to get an embedding $\Lambda'$ of $\lambda$ in a closed, connected, orientable, 3-manifold $M'$ such that $(M',\Lambda')$ has an orientation reversing involution $h$ pointwise fixing $\Lambda'$ and

$$\mathrm{dim} _{\mathbb{Z}_2} (H_1 (M',\mathbb{Z}_2))\leq n_M=\mathrm{dim} _{\mathbb{Z}_2} (H_1 (M,\mathbb{Z}_2))+N_M.$$

\smallskip

Let $F$ be the component of the fixed point set $\mathrm{fix}(h)$ containing $\Lambda'$, and let $x\in\Lambda'$.
Since $h$ is a smooth involution,
we can now pick a neighborhood $N(x)$ which is homeomorphic to a ball and is invariant under $h$ (see for example, Theorem 2.2 in \cite{Bred}). Then by Smith Theory \cite{Sm}, since $h|N(x)$ is an orientation reversing involution of the ball $N(x)$, the fixed point set of $h|N(x)$ is either a single point or a properly embedded disk.  Since $N(x)\cap \Lambda$ contains more than one point, $\mathrm{fix}(h|N(x))$ is a properly embedded disk, and hence $F$ is a closed surface.  Thus

$$\mathrm{genus}(\lambda)\leq \mathrm{genus}(F)=\frac{2-\chi(F)}{2}$$

$$=\frac{\mathrm{dim}_{\mathbb{Z}_2}(H_1(F, \mathbb{Z}_2))}{2}\leq \frac{\mathrm{dim}_{\mathbb{Z}_2}(H_1(\mathrm{fix}(h), \mathbb{Z}_2))}{2}.$$

\smallskip

\noindent Hence we have the inequality $$2\mathrm{genus}(\lambda)\leq \mathrm{dim}_{\mathbb{Z}_2}(H_1(\mathrm{fix}(h), \mathbb{Z}_2)).$$

Also, since $h$ is an orientation reversing involution and $M'$ is a closed orientable manifold, we can apply Kobayashi's Theorem \cite{Ko} to obtain the inequality $$ \mathrm{dim}_{\mathbb{Z}_2} (H_1 (\mathrm{fix}(h),\mathbb{Z}_2)) \leq \mathrm{dim} _{\mathbb{Z}_2} (H_1 (M',\mathbb{Z}_2)) + \mathrm{dim} _{\mathbb{Z}} (H_1 (M',\mathbb{Z})).$$

\noindent It follows that $$ \mathrm{dim}_{\mathbb{Z}_2} (H_1 (\mathrm{fix}(h),\mathbb{Z}_2))\leq 2\mathrm{dim} _{\mathbb{Z}_2} (H_1 (M',\mathbb{Z}_2)).$$  

\smallskip

\noindent Combining the above inequalities, we now have $$\mathrm{genus}(\lambda)\leq \mathrm{dim} _{\mathbb{Z}_2} (H_1 (M',\mathbb{Z}_2)).$$  

\noindent But $M'$ was given by Proposition~\ref{mainprop} such that $$\mathrm{dim} _{\mathbb{Z}_2} (H_1 (M',\mathbb{Z}_2))\leq \mathrm{dim} _{\mathbb{Z}_2} (H_1 (M,\mathbb{Z}_2))+N_M.$$

\noindent Hence we obtain the required inequality $$\mathrm{genus}(\lambda)\leq \mathrm{dim} _{\mathbb{Z}_2} (H_1 (M,\mathbb{Z}_2))+N_M=n_M.$$ 

\noindent It now follows that if $\gamma$ has a 3-connected minor whose genus is greater than $n_M$, then $\gamma$ must be intrinsically chiral in $M$.  \end{proof}

\bigskip







\section{Proof of Proposition~\ref{mainprop}}

In the course of the proof of Proposition~\ref{mainprop}, we will use the well known  ``Half Lives, Half Dies" Theorem, which we state below. 
 See \cite{H1} or \cite{Ha} for a proof of this theorem. 

\begin{theorem} {\em (Half Lives, Half Dies)}  Let $M$ be a compact orientable $3$-manifold.  Then the following equation holds with any field coefficients
\[ \dim \left( Kernel (H_1(\partial M) \to H_1(M)) \right) = \frac{1}{2} \dim H_1(\partial M). \]
\label{thm:hlhdhom}
\end{theorem}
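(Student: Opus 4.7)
The plan is to combine the long exact sequence of the pair $(M,\partial M)$ with Poincar\'e--Lefschetz duality and reduce the claim to a rank--nullity count. All coefficients are taken in the fixed field, and $\dim$ denotes dimension over that field.

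First, I would extract the segment
\[ H_2(M,\partial M) \xrightarrow{\partial_*} H_1(\partial M) \xrightarrow{i_*} H_1(M) \]
of the long exact sequence of the pair. Exactness gives $\ker(i_*)=\operatorname{image}(\partial_*)$, hence $\dim\ker(i_*)=\operatorname{rank}(\partial_*)$. The strategy is then to show $\operatorname{rank}(\partial_*)=\operatorname{rank}(i_*)$, for then rank--nullity for $i_*$ immediately yields $\dim H_1(\partial M)=2\dim\ker(i_*)$.

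The next step is to identify $\operatorname{rank}(\partial_*)$ with $\operatorname{rank}(i_*)$ via duality. Since $M$ is compact orientable of dimension $3$ and $\partial M$ is (automatically) a closed orientable surface, Poincar\'e--Lefschetz duality supplies isomorphisms $H_2(M,\partial M)\cong H^1(M)$ and $H_1(\partial M)\cong H^1(\partial M)$, and by naturality of duality with respect to the inclusion of pairs these isomorphisms carry $\partial_*$ to the restriction map $i^*\colon H^1(M)\to H^1(\partial M)$. In particular $\operatorname{rank}(\partial_*)=\operatorname{rank}(i^*)$. Over a field the universal coefficient theorem expresses $i^*$ as the vector-space dual of $i_*$, so $\operatorname{rank}(i^*)=\operatorname{rank}(i_*)$. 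Stringing these equalities together yields $\dim\ker(i_*)=\operatorname{rank}(i_*)$, and the rank--nullity formula for $i_*\colon H_1(\partial M)\to H_1(M)$ then gives
\[ \dim H_1(\partial M) \;=\; \dim\ker(i_*)+\operatorname{rank}(i_*) \;=\; 2\dim\ker(i_*), \]
which is precisely the desired identity.

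The only non-formal ingredient is the compatibility of the Poincar\'e--Lefschetz duality isomorphisms with the connecting map $\partial_*$, i.e.\ the identification of $\partial_*$ with $i^*$ under duality. This is the standard naturality statement for Lefschetz duality and is the geometric heart of the argument; I would cite it from a standard reference (for instance, Hatcher's \emph{Algebraic Topology}) rather than reprove it in detail, since everything else in the argument is purely diagram-chasing and linear algebra.
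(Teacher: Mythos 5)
Your proposal is correct: the long exact sequence of $(M,\partial M)$, Poincar\'e--Lefschetz duality identifying $\partial_*$ with $i^*$, the universal coefficient theorem over a field to equate $\operatorname{rank}(i^*)$ with $\operatorname{rank}(i_*)$, and rank--nullity is exactly the standard argument. The paper itself does not prove this theorem but defers to the references \cite{H1} and \cite{Ha}, where essentially this same duality argument is given, so your route matches the proof the paper relies on.
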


\begin{corollary}
Let $M$ be a manifold which has a torus boundary component $T$.  Then for any pair of generators $a$ and $b$ of $H_1(T,\mathbb{Z}_2)$, at least one of $a$ and $b$ is non-trivial in $H_1(M,\mathbb{Z}_2)$.
\label{cor:hlhd}
\end{corollary}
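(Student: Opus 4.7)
The plan is to prove the corollary by contradiction, reducing to the situation where $T$ is the only boundary component of $M$ and then invoking Half Lives, Half Dies directly. Throughout I assume, as is implicit from the statement of Theorem~\ref{thm:hlhdhom}, that $M$ is compact and orientable; otherwise the theorem we are applying would not even be available.

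Suppose for contradiction that both generators $a$ and $b$ of $H_1(T,\mathbb{Z}_2)$ vanish in $H_1(M,\mathbb{Z}_2)$. I first reduce to the case $\partial M = T$ by capping off every other boundary component. Since $M$ is orientable, each component of $\partial M$ other than $T$ is a closed orientable surface $F_i$ of some genus $g_i$, and therefore bounds a handlebody $H_i$ of genus $g_i$. Gluing each $H_i$ to $M$ along the corresponding $F_i$ produces a compact orientable $3$-manifold $M'$ with $M\subseteq M'$ and $\partial M' = T$.

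Now I apply Theorem~\ref{thm:hlhdhom} to $M'$: the kernel $K'$ of $H_1(T,\mathbb{Z}_2)\to H_1(M',\mathbb{Z}_2)$ has dimension $\tfrac{1}{2}\dim H_1(T,\mathbb{Z}_2)=1$. On the other hand, the inclusions $T\hookrightarrow M\hookrightarrow M'$ give a commutative diagram, so the assumed vanishing of $a$ and $b$ in $H_1(M,\mathbb{Z}_2)$ forces $a$ and $b$ to vanish in $H_1(M',\mathbb{Z}_2)$ as well. Since $\{a,b\}$ generates $H_1(T,\mathbb{Z}_2)$, the entire map $H_1(T,\mathbb{Z}_2)\to H_1(M',\mathbb{Z}_2)$ is zero, so $\dim K' = 2$. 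This contradicts $\dim K'=1$, completing the proof.

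The only mild obstacle is the bookkeeping in the capping-off step, namely verifying that handlebody gluings preserve compactness and orientability and exactly kill the unwanted boundary components. Once this is set up, Half Lives, Half Dies does all the work. An alternative route would be to invoke the stronger fact that the kernel of $H_1(\partial M,\mathbb{Z}_2)\to H_1(M,\mathbb{Z}_2)$ is isotropic with respect to the mod-$2$ intersection form on $\partial M$, whose restriction to $H_1(T,\mathbb{Z}_2)$ is non-degenerate; this would rule out both $a$ and $b$ lying in the kernel without capping off. I prefer the capping-off approach because it uses only the version of Half Lives, Half Dies quoted in the paper.
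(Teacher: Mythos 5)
Your proof is correct and follows essentially the same route as the paper: cap off all boundary components other than $T$ with handlebodies and then apply the Half Lives, Half Dies Theorem to conclude that the kernel of $H_1(T,\mathbb{Z}_2)\to H_1$ of the capped-off manifold cannot be all of $H_1(T,\mathbb{Z}_2)$. Your explicit note that $M$ must be compact and orientable (and the commutative-diagram step) just makes explicit what the paper leaves implicit.
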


\begin{proof}
Suppose for the sake of contradiction that the generators $a$ and $b$ are both trivial in $H_1(M,\mathbb{Z}_2)$.  Attach handlebodies to all boundary components of $M$ except $T$ to form a new manifold $J$ with a single boundary component.  Then $a$ and $b$ are both trivial in $H_1(J,\mathbb{Z}_2)$.   Since $a$ and $b$ generate the homology of the only boundary component of $J$, we see that $\dim \left( Kernel (H_1(\partial J,\mathbb{Z}_2)) \to H_1(J,\mathbb{Z}_2) \right) =  \dim_{\Z_2} (H_1(\partial J,\mathbb{Z}_2)) = 2$.  But this contradicts the Half Lives, Half Dies Theorem.  Thus at least one of the generators of $H_1(T,\mathbb{Z}_2)$ must have been non-trivial in $M$.\end{proof}

\medskip

Note that it is tempting to assume that the Half Lives, Half Dies Theorem implies that one of $a$ or $b$ must be trivial in $J$.  But this is not always true. In particular, let $M$ denote the product of a torus and an interval.  Then no non-trivial curve in the boundary of $M$ is in the kernel. 
\medskip 

The proof of Proposition~\ref{mainprop} will make use of the Characteristic Decomposition Theorem for Pared Manifolds.  For readers who may not be familiar with pared manifolds, we include the following definitions and results.

\begin{definition}  Let $M$ be an orientable 
$3$-manifold and let $P$ be a set of 
disjoint incompressible annuli and tori in 
$\partial M$. Then we say $(M,P)$ is a \emph{pared 
$3$-manifold} \end{definition}        

\medskip\nobreak 

Note that a pared manifold is a special type of $3$-manifold 
with boundary pattern in the sense of Johannson 
\cite{Jo} as well as a $3$-manifold pair in the sense of 
Jaco-Shalen \cite{JS}.  The following definitions 
are consistent with those of \cite{Jo} and \cite{JS}.

\medskip\nobreak 

\begin{definition}      A pared manifold 
$(M,P)$ is \emph{simple} if it 
satisfies the following conditions: 
     
\par \noindent 1)  $M$ is irreducible and 
$\partial M-P$ is incompressible.

\par \noindent 2)  Every incompressible torus in 
$M$ is parallel to a torus component of $P$.

\par \noindent 3)  Every annulus $A$ in $M$ such that 
$\partial A\subseteq \partial M-P$ is 
either compressible or parallel to an annulus 
$A'$ in $\partial M$ with $\partial A'=\partial 
A$ such that $A'\cap P$ consists of zero or 
one annular component of $P$.\end{definition}     

\medskip\nobreak 
\begin{definition}   A pared manifold 
$(M,P)$ is \emph{Seifert fibered} if 
there is a Seifert fibration of $M$ for which $P$ 
is a union of fibers.  A pared manifold $(M,P)$ 
is \emph{ I-fibered} if there is an 
$I$-bundle map of $M$ over a surface $B$ such 
that $P$ is the preimage of $\partial 
B$.\end{definition}     

\medskip
\begin{paredJSJ}   \cite{JS}, \cite{Jo}  Let $(X,P)$ be a pared manifold where 
$X$ is irreducible and $\partial X-P$ is
incompressible.  Then, up to an isotopy of 
$(X,P)$, there is a unique set $\Omega $ of 
disjoint incompressible tori and annuli with 
$\partial \Omega\subseteq \partial X-P$ 
such that the following conditions hold:
\begin{enumerate}
\item  If $W$ is the closure of a 
component of $X-\Omega $, then the pared manifold 
$(W,W\cap (P\cup \Omega ))$ is either simple, 
Seifert fibered, or $I$-fibered. 
\medskip

\item  There is no set $\Omega '$ 
with fewer elements than $\Omega $ which 
satisfies the above.\end{enumerate}\end{paredJSJ}

\medskip\nobreak 
 \begin{paredThurston}   \cite{Th}  Suppose that a pared manifold $(M,P)$ is simple, $M$ 
is connected, and $\partial M$ is non-empty.  Then 
either $M-P$ admits a finite volume complete 
hyperbolic metric with totally geodesic boundary, $(M,P)$ is Seifert fibered, or $(M,P)$ is
$I$-fibered.\end{paredThurston}

\bigskip

We are now ready to prove Proposition~\ref{mainprop}.  Recall that the definition of the constant $n_M$ is given in Section 1.

\begin{prop}\label{mainprop}  Let $\gamma$ be a 3-connected graph with genus at least 2, and let $\Gamma$ be an embedding of $\gamma$  in a closed, connected, orientable, irreducible 3-manifold $M$ such that $(M,\Gamma)$ has an orientation reversing homeomorphism $g$ fixing every vertex of $\Gamma$.

Then there is an embedding $\Gamma'$ of $\gamma$ in a closed, connected, orientable 3-manifold $M'$ such that $(M',\Gamma')$ has an orientation reversing involution pointwise fixing $\Gamma'$ and $\mathrm{dim} _{\mathbb{Z}_2} (H_1 (M',\mathbb{Z}_2))\leq n_M.$ 
\end{prop}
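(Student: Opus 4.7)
The plan is to build $M'$ by leaving a regular neighborhood of $\Gamma$ in $M$ intact (arranged so that $g$ acts involutively there) and replacing its exterior by a carefully chosen handlebody so that the involution extends across the gluing surface.

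First I would normalize $g$: using the remark that $g$ is isotopic to a homeomorphism pointwise fixing $\Gamma$, together with the fact that an orientation-reversing diffeomorphism of a disk fixing the origin is isotopic to a reflection, I arrange that on a regular neighborhood $N = N(\Gamma)$ (a handlebody of genus $\beta_1(\Gamma)$) the map $g$ is a genuine orientation-reversing involution $\tau$ whose fixed set is a $2$-sided surface $R$ with $\Gamma \subset R$. Write $\Sigma = \partial N$ and $\sigma = \tau|_\Sigma = g|_\Sigma$, an orientation-reversing involution of $\Sigma$ with $\mathrm{fix}(\sigma) = \partial R$. Next, I apply Theorem~\ref{thm:hlhdhom} to $X = \mathrm{cl}(M - N)$: the kernel $W_X = \ker(H_1(\Sigma, \mathbb{Z}_2) \to H_1(X, \mathbb{Z}_2))$ is a Lagrangian of dimension $\beta_1(\Gamma)$. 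Since $g$ carries $X$ to itself and acts as $\sigma$ on $\Sigma$, naturality forces $W_X$ to be $\sigma_*$-invariant.

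I would then construct a handlebody $Y$ with $\partial Y = \Sigma$ whose meridian system realizes a $\sigma$-equivariant basis of $W_X$; such a $Y$ automatically admits an orientation-reversing involution $\tau_Y$ extending $\sigma$, with fixed set a properly embedded surface with boundary $\partial R$. Setting $M' = N \cup_\Sigma Y$ and $h = \tau \cup \tau_Y$ yields a closed orientable $3$-manifold together with the orientation-reversing involution $h$ pointwise fixing $\Gamma' = \Gamma$, whose fixed set is the closed surface $R \cup_{\partial R} \mathrm{fix}(\tau_Y)$. For the homology bound, Mayer--Vietoris on $M' = N \cup_\Sigma Y$ gives $\dim_{\mathbb{Z}_2} H_1(M', \mathbb{Z}_2) = \dim(W_N \cap W_X)$, where $W_N = \ker(H_1(\Sigma, \mathbb{Z}_2) \to H_1(N, \mathbb{Z}_2))$ is the meridian Lagrangian of $N$. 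Applying Mayer--Vietoris to $M = N \cup_\Sigma X$ and using $\dim_{\mathbb{Z}_2} H_1(X, \mathbb{Z}_2) \ge \beta_1(\Gamma)$ (since the image of $H_1(\Sigma) \to H_1(X)$ has that dimension) gives $\dim_{\mathbb{Z}_2} H_1(M, \mathbb{Z}_2) \ge \dim(W_N \cap W_X)$, hence $\dim_{\mathbb{Z}_2} H_1(M', \mathbb{Z}_2) \le \dim_{\mathbb{Z}_2} H_1(M, \mathbb{Z}_2) \le n_M$, as required.

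The main obstacle will be producing the $\sigma$-equivariant meridian system for $W_X$ used above: although $W_X$ is a $\sigma$-invariant Lagrangian and hence has some basis, representing such a basis by $\beta_1(\Gamma)$ disjoint simple closed curves on $\Sigma$ that are permuted setwise by $\sigma$ and cut $\Sigma$ into a punctured sphere (as required for $Y$ to be a handlebody) is nontrivial, since the $\sigma$-image of a chosen meridian can intersect the remaining meridians unavoidably, and over $\mathbb{Z}_2$ the action of $\sigma_*$ on $W_X$ may have nontrivial Jordan blocks. Handling these obstructions should be possible via equivariant cut-and-paste along the fixed curves $\partial R$, supplemented in the residual cases by cutting $M$ along a controlled number of incompressible tori to simplify the action of $\sigma$ on $W_X$ enough for the equivariant construction to go through --- this is presumably how the constant $N_M$ enters the final bound $n_M = \dim_{\mathbb{Z}_2} H_1(M, \mathbb{Z}_2) + N_M$.
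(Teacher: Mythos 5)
There is a genuine gap, and it sits exactly at the step you flag as the ``main obstacle.'' Your whole construction hinges on producing a handlebody $Y$ with $\partial Y=\Sigma$, an involution of $Y$ extending $\sigma$, and meridian kernel equal to the prescribed Lagrangian $W_X$. Nothing in the proposal establishes this, and it is not a technicality: an orientation-reversing involution $\sigma$ of $\Sigma$ with fixed set $\partial R$ need not leave invariant any complete meridian system representing $W_X$ (indeed even the non-equivariant realization of a given $\mathbb{Z}_2$-Lagrangian by a handlebody requires an argument), and your suggested remedies (``equivariant cut-and-paste,'' ``cutting $M$ along a controlled number of incompressible tori'') are speculation rather than proof -- in particular the constant $N_M$ never actually enters your argument, it is only ``presumably'' where it would enter. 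A further warning sign: if your construction worked as stated, the Mayer--Vietoris bookkeeping you do (which is correct, granted the existence of the equivariant $Y$ with $W_Y=W_X$) would give the stronger bound $\mathrm{dim}_{\mathbb{Z}_2}H_1(M',\mathbb{Z}_2)\le \mathrm{dim}_{\mathbb{Z}_2}H_1(M,\mathbb{Z}_2)$ with no $N_M$ at all, and it would use none of the hypotheses that $\gamma$ is $3$-connected, that $\mathrm{genus}(\gamma)\ge 2$, or that $M$ is irreducible. Those hypotheses are essential in the actual proof, which is a strong indication that the unproved step is where the real content lies.

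Relatedly, your opening normalization -- that $g$ can be isotoped to a genuine involution $\tau$ on $N(\Gamma)$ with fixed surface containing $\Gamma$ -- is asserted rather than proved, and it quietly hides the central difficulty of the proposition: $g$ is only a homeomorphism, possibly of infinite order, and converting it into an involution is the whole point. The paper does not discard the exterior and reglue a handlebody; it keeps the JSJ piece $X$ of $\mathrm{cl}(M-N(\Gamma))$ adjacent to $\partial N(\Gamma)$, applies the pared characteristic decomposition to $(X,P)$, uses $3$-connectivity (plus the Jiang--Wang planarity result and irreducibility) to show the relevant piece $W$ is a simple pared manifold, and then invokes Thurston's hyperbolization, Mostow rigidity, and Waldhausen's isotopy theorem to replace $g|W$ by a finite-order isometry which is then shown to be an involution; the involution is extended over the balls $U_{e_j}$, $V_{F_i}$ and $N(V)$ (re-embedding the edges as cores, giving $\Gamma'$), and finally the toral boundary of $X\cup N(\Gamma)$ is Dehn-filled equivariantly using the Half Lives, Half Dies corollary. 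The constant $N_M$ enters there, by showing at most $N_M$ of those boundary tori are incompressible in $M$ and that only the fillings along such tori can increase $\mathbb{Z}_2$-homology. To salvage your approach you would have to prove the equivariant Lagrangian-realization statement, and no such argument is offered.
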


\medskip

\begin{proof} Let $\Lambda$ denote either $\Gamma$ or $\Gamma$ with one edge deleted, and suppose that $\Lambda$ is contained in a ball $B$ in $M$.  Since $g(B)$ is isotopic to $B$ in $M$, we can assume that $g$ leaves $B$ setwise invariant.  Also, since $g$ fixes every vertex of $\Lambda$ and $\Lambda$ has at most one edge between any pair of vertices, $g$ takes each edge to itself.  Thus $g|B$ is isotopic to a homeomorphism which pointwise fixes $\Lambda$.  So we assume that $g$ leaves $B$ setwise invariant and pointwise fixes $\Lambda$.   Let $f$ be an embedding of $(B, \Lambda)$ in $S^3$.  Then $f\circ g\circ f^{-1}$ is an orientation reversing homeomorphism of $f(B)$ pointwise fixing $f(\Lambda)$.   Now, $f\circ g\circ f^{-1}$ can be extended to an orientation reversing homeomorphism of $S^3$ pointwise fixing $f(\Lambda)$.  

However, Jiang and Wang \cite{JW} showed that no graph containing $K_{3,3}$ or $K_5$ has an embedding in $S^3$ which is pointwise fixed by an orientation reversing homeomorphism of $S^3$.  Thus $\Lambda$ cannot contain $K_{3,3}$ or $K_5$, and hence is abstractly planar.  But this implies that $\mathrm{genus}(\Gamma)\leq 1$, which is contrary to our hypothesis.  Thus neither $\Gamma$ nor $\Gamma$ with an edge deleted can be contained in a ball in $M$.  We will use this result later in the proof.

\medskip

 Since the remainder of the proof is quite lengthy, we break it into steps.

\bigskip

\noindent{\bf Step 1:  We define a neighborhood $N(\Gamma)$. }
\bigskip

 Let  $V$ and $E$ be the sets of vertices and edges of $\Gamma$ respectively. For each vertex $v\in V$, define $N(v)$ to be a ball around $v$ in $M$ (i.e., a 0-handle containing $v$), and let $N(V)$ denote the union of the balls around the vertices.  Also, for each edge $e\in E$, let $N(e)=D \times I$ be a solid tube around $\mathrm{cl}(e-N(V))$ in $M$ (i.e., a 1-handle containing the portion of $e$ outside of the 0-handles). Then the intersection $N(V)  \cap  N(e)$ follows the standard convention for attaching 1-handles to 0-handles.  In other words, in $N(e)$ this intersection consists of the disks $D \times \{0\}$ and $D \times \{1\}$ in $\partial N(V)$.  Let $N(E)$ denote the union of the tubes around the edges.  Then $N(\Gamma)=N(E)\cup N(V)$ is a neighborhood of $\Gamma$.  
 
 For convenience we introduce the following terminology.  For each vertex $v$, we let $\partial' N(v)$ denote the sphere with holes $\partial N(v)\cap \partial N(\Gamma)$, and for each edge $e$ we let $\partial' N(e)$ denote the annulus $\partial N(e)\cap \partial N(\Gamma)$.  Thus $\partial N(\Gamma)=\partial' N(E)\cup\partial 'N(V)$.

Now since $g(\Gamma)=\Gamma$ fixing each vertex of $\Gamma$, we know that $g(N(\Gamma))$ is isotopic to $N(\Gamma)$ setwise fixing $\Gamma$ and fixing each vertex.  Thus we can modify $g$ by an isotopy (and by an abuse of notation, still refer to the map as $g$) so that $g(\Gamma)=\Gamma$, and  for each vertex $v$ and edge $e$ we have $g(N(v))= N(v)$ and $g(N(e))= N(e)$.  Because this modification was by an isotopy, our new
$g$ is still orientation reversing.  

 \bigskip

\noindent{\bf Step 2:  We split $\mathrm{cl}(M - N(\Gamma))$ along a family $\tau$ of JSJ tori and choose an invariant component $X$.}

\bigskip

Since $M$ is irreducible and we have assumed that $\Gamma$ is not contained in a ball, $\mathrm{cl}(M - N(\Gamma))$ is irreducible.  Thus we can apply the Characteristic Decomposition Theorem of Jaco-Shalen \cite{JS} and Johannson \cite{Jo} to get a minimal  family of incompressible tori $\tau$ for $\mathrm{cl}(M - N(\Gamma))$ such that each closed up component of $M - (N(\Gamma) \cup \tau)$ is either Seifert fibered or atoroidal.  Since the characteristic family $\tau$ is unique up to isotopy, we can again modify $g$ by an isotopy (and again by an abuse of notation still refer to the map as $g$) so that $g(\tau)=\tau$ and still have $g(\Gamma)=\Gamma$, $g(N(v))= N(v)$ and $g(N(e))= N(e)$ for each vertex $v$ and edge $e$.  Let $X$ be the closed up component of $M - (N(\Gamma) \cup \tau)$ containing $\partial N(\Gamma)$ (see for example Figure~\ref{JSJ}), then $g(X)=X$. 

\begin{figure}[here]
\begin{center}
\includegraphics[width=.75\textwidth]{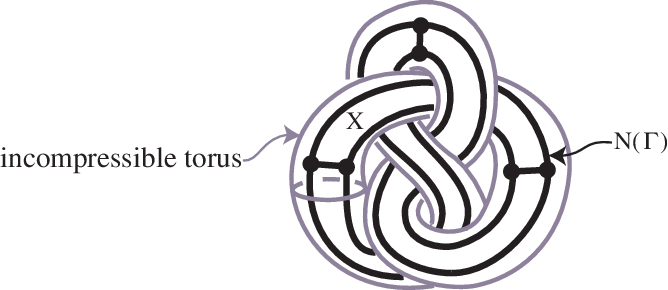}
\caption{$X$ is the closed up component of $M - (N(\Gamma) \cup \tau)$ between the grey incompressible torus and the black $\partial N(\Gamma)$.}
\label{JSJ}
\end{center}
\end{figure}

 Also, since $\Gamma$ is 3-connected, $\mathrm{genus}(\partial N(\Gamma))>1$.  Thus the component $X$ is not Seifert fibered, and hence is atoroidal.  Let $P $ denote the set of torus boundary components of $X$ together with the annuli that make up the components of $\partial 'N(E)$.  Since $\Gamma$ is 3-connected, $\partial X-P=\partial N(\Gamma) - \partial' N(E)=\partial 'N(V)$ is incompressible in $\mathrm{cl}(M - N(\Gamma))$.  It follows that $\partial X-P$ is incompressible in $X$.  Furthermore, $X$ is irreducible since $\mathrm{cl}(M - N(\Gamma))$ is irreducible and $X$ is a component of the JSJ decomposition of $\mathrm{cl}(M - N(\Gamma))$.  

\bigskip

In the next step, we consider the pared manifold $(X,P)$.

\noindent{\bf Step 3:  We show that any sphere obtained by capping off an annulus $A$ in  the JSJ decomposition of $(X,P)$ bounds a ball $B\subseteq M$ such that if the components of $\partial A$ are in distinct components of $\partial N(V)$, then $B$ meets $\Gamma-N(V)$ in a single edge, and if the components of $\partial A$ are in the same component of $\partial N(V)$, then $B$ is disjoint from $\Gamma-N(V)$.}
\bigskip

We now apply the Characteristic Decomposition Theorem for Pared Manifolds \cite{JS, Jo} to the pared manifold $(X,P)$. Since $X$ is atoroidal, this gives us a characteristic family $\sigma$ of incompressible annuli in $X$ with boundaries in $\partial X - P$ such that if $W$ is the closure of any component of $X-\sigma$, then the pared manifold $(W, W \cap (P \cup \sigma))$ is either simple, Seifert fibered, or $I$- fibered.  Once again, since the characteristic family $\sigma$  is unique up to isotopy, we can modify $g$ by an isotopy (and again by an abuse of notation, still refer to the map as $g$) so that $g(\sigma)=\sigma$. 

Let $A$ be an annulus component of $P\cup \sigma$, and let $S$ denote the sphere obtained by capping off $A$ by a pair of disjoint disks $D_1$ and $D_2$ in $ N(v_1)$ and $\partial N(v_2)$, where $v_1$ and $v_2$ may or may not be distinct vertices.  Suppose that each component of $M-S$ intersects more than one edge of $\Gamma-N(V)$.  Then by removing the vertices $v_1$ and $v_2$ and the edges that contain them we would obtain two non-empty subgraphs (see Figure~\ref{Annulus}).  But this contradicts our hypothesis that $\Gamma$ is 3-connected.  Thus one of the components of $M-S$ meets $\Gamma-N(V)$ in at most one edge of $\Gamma$.

\begin{figure}[here]
\begin{center}
\includegraphics[width=.5\textwidth]{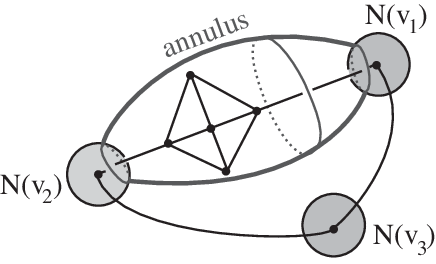}
\caption{There is more than one edge on each side of this capped off annulus, and hence $\Gamma$ is not $3$-connected.  }
\label{Annulus}
\end{center}
\end{figure}
\bigskip

Now, since $M$ is irreducible, one of the closed up components of $M-S$ is a ball $B$.  However, we showed at the beginning of our proof that neither $\Gamma$ nor $\Gamma$ with an edge removed can be contained in a ball in $M$.  Thus $B$ must be the closed up component of $M-S$ intersecting at most one edge of $\Gamma-N(V)$.  Furthermore, since the annulus $A$ is incompressible in $X$, if $v_1\not =v_2$ then there is some edge $e$ with vertices $v_1$ and $v_2$ such that $B$ contains $\mathrm{cl}(e-(N(v_1)\cup N(v_2)))$. On the other hand, suppose that $v_1=v_2$.  Then by our definition of a graph, every edge in  $\Gamma$ has two distinct vertices.  Thus the component of $M-S$ which meets $\Gamma-N(V)$ in at most one edge, is actually disjoint from $\Gamma-N(V)$.  In this case, $B$ must be disjoint from $\Gamma-N(V)$ as illustrated in Figure~\ref{v1v2}.  

\begin{figure}[here]
\begin{center}
\includegraphics[width=.5\textwidth]{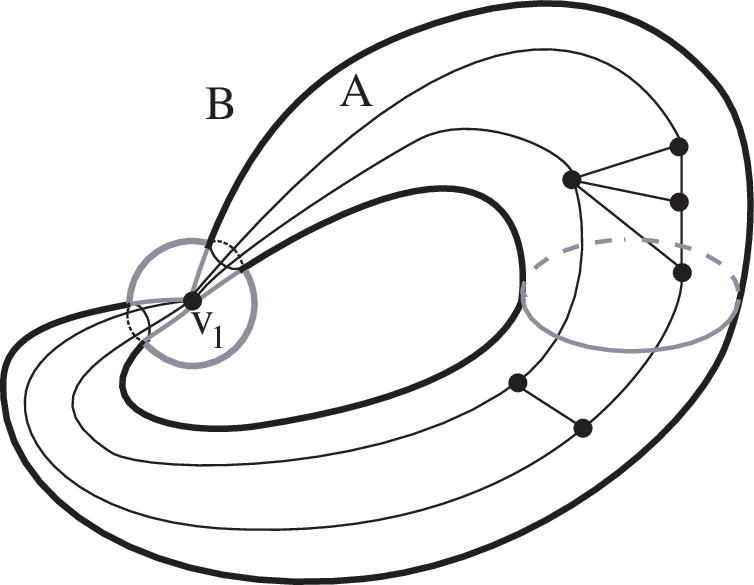}
\caption{The component $B$ of $M-S$ is disjoint from $\Gamma-N(V)$.  }
\label{v1v2}
\end{center}
\end{figure}

Note that since a ball cannot contain an incompressible torus, no torus boundary component of $X$ can occur inside of $B$.  It follows that every torus boundary component of $ X$ must also be a boundary component of $X-B$.

\bigskip

\noindent{\bf Step 4:  We define a collection of balls $U_{e_1}$, \dots, $U_{e_n}$, $V_{F_1}$, \dots, $V_{F_m}$ in $M$ such that the manifold $W=\mathrm{cl}(X-(U_{e_1}\cup\dots\cup U_{e_n}\cup V_{F_1}\cup \dots\cup V_{F_m}))$ is the closure of a single component of $X-(\sigma\cup P)$.}
\bigskip

Let $A$ be an annulus in $P\cup \sigma$ with one boundary in $\partial N(v_1)$ and the other boundary in $\partial N(v_2)$ where $v_1\not =v_2$.  By capping off $A$ with disks in $\partial N(v_1)$ and $\partial N(v_2)$ we obtain a sphere which (as we saw in Step 3) bounds a ball $B$ that meets $\Gamma$ in $\mathrm{cl}(e-(N(v_1)\cup N(v_2)))$ for some edge $e$.  

Note that by our definition of a graph, $e$ is the only edge of $\Gamma$ whose vertices are $v_1$ and $v_2$.  Thus we can let $\mathcal{C}_e$ denote the collection of all annuli in $P\cup \sigma$ with one boundary in $\partial N(v_1)$ and the other boundary in $\partial N(v_2)$.  Now we cap off the annuli in $\mathcal{C}_e$ with disks in $\partial N(v_1)$ and $\partial N(v_2)$ to obtain a collection of spheres which bound nested balls containing $\mathrm{cl}(e-(N(v_1)\cup N(v_2)))$.  Let $A_e$ denote the annulus in $\mathcal{C}_e$ which, when capped off in this way, is outermost with respect to the nested balls.  

Observe that the boundaries of $A_{e}$ bound disks $D_1\subseteq \partial N(v_1)$ and $D_2\subseteq \partial N(v_2)$ which each meet $\Gamma$ in a single point of $e$, and the sphere $A_{e}\cup D_1\cup D_2$ bounds a ball $U_{e}$ which contains $\mathrm{cl}(e-(N(v_1)\cup N(v_2)))$ and all of the other annuli in $\mathcal{C}_{e}$.  Furthermore, as we saw at the end of Step 3, no torus boundary component of $X$ can be contained in $U_e$.

We see as follows that $X-U_e$ has a single component.  If $A_e=\partial'N(e)$, then $U_e=N(e)$ and hence $\mathrm{cl}(X-U_e)=X$.  Thus we assume that $A_e\not =\partial'N(e)$.  In this case, $U_e \cap X$ is obtained by removing the interior of $N(e)$ from the ball $U_e$.  Thus $\partial(X-U_e)\cap U_e=A_e$ is an annulus.  Now let $p$ and $q$ be points in $X-U_e$.  Since $ X$ is path connected, there are paths $P$ and $Q$ in $X-U_e$ from $p$  and $q$ respectively to $\partial(X-U_e)\cap U_e$.  Now since $\partial(X-U_e)\cap U_e$ is an annulus, there is a path in $\partial(X-U_e)\cap U_e$ joining $P$ and $Q$.  Thus there is a path from $p$ to $q$ in $X-U_e$.  We will use this at the end of this step.


 
We repeat the above construction for each annulus in $P\cup \sigma$ with boundaries in distinct components of $\partial N(V)$.  This gives us a collection of pairwise disjoint balls $U_{e_1}$, \dots, $U_{e_n}$. Observe that for every edge $e_j$, the annulus $\partial'N(e_j)$ is in $P$ and its boundaries are in distinct components of $\partial N(V)$.  Thus every edge $e_j$ in $\Gamma$ is contained in some $U_{e_j}$.
It follows that $U_{e_1}\cup\dots \cup U_{e_n}$ contains $\mathrm{cl}(\Gamma-N(V))$ and contains every annulus in $P\cup \sigma$ with boundaries in distinct components of $\partial N(V)$.  

Next we consider an annulus $F$ in $\sigma$ both of whose boundaries are in a single $\partial N(v)$.  We saw in Step 3 that if we cap off $F$ by a pair of disjoint disks in $N(v)$, we obtain a sphere which bounds a ball that is disjoint from $\Gamma-N(v)$.  Thus the components of $\partial F$ are parallel in $\partial' N(v)$.  We now cap off every such annulus to obtain a collection of spheres which bound balls that are either nested or disjoint. 
Thus we can choose an annulus $F\in\sigma$ which, when capped off with disks in $N(v)$, bounds an outermost ball $V_F$ with respect to this nesting (see Figure \ref{Fi}).  

\begin{figure}[here]
\begin{center}
\includegraphics[width=.3\textwidth]{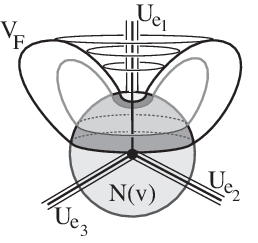}
\caption{$V_F$ is an outermost ball.}
\label{Fi}
\end{center}
\end{figure}

As we saw above for $U_e$, the ball $V_F$ cannot contain any torus boundary components of $X$.  In this case, $V_F\cap X$ is obtained by removing the interior of $V_F\cap N(v)$ from the ball $V_F$.   Thus $\partial(X-V_F)\cap V_F=F$ is an annulus.  Now by an argument analogous to the argument for $X-U_e$ we see that for any pair of points $p$ and $q$ in $X-V_F$ there is a path from $p$ to $q$ in $X-V_F$.  Thus $X-V_F$ also has a single component.




Now for each annulus $F\in \sigma$ with both components of $\partial F$ in a single $\partial N(v)$ such that $F$ is not contained in one of the balls $U_{e_j}$, we use the above argument to define a ball $V_{F}$.    In this way, we obtain a collection of pairwise disjoint balls $U_{e_1}$, \dots, $U_{e_n}$, $V_{F_1}$, \dots, $V_{F_m}$.  Furthermore, each of the sets $X-U_{e_j}$ and $X-V_{F_i}$ has a single component, and each $\partial U_{e_j}-\partial X$ and $\partial V_{F_i}-\partial X$ is an annulus in $\sigma\cup P$.  Now it follows that the manifold $$W=\mathrm{cl}(X-(U_{e_1}\cup\dots\cup U_{e_n}\cup V_{F_1}\cup \dots\cup V_{F_m}))$$ is the closure of a single component of $X-(\sigma\cup P)$ (see Figure~\ref{annulus}).


\begin{figure}[h]
\begin{center}
\includegraphics[width=.55\textwidth]{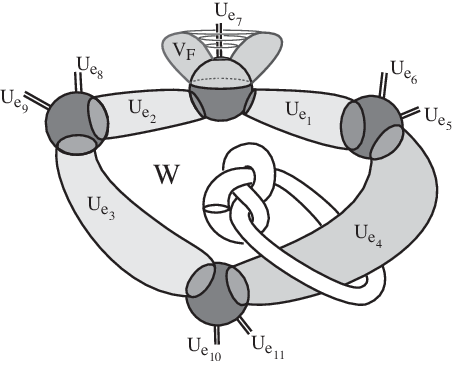}
\caption{$W$ is the closure of a single component of $X-\sigma$.}
\label{annulus}
\end{center}
\end{figure}

\bigskip

\noindent{\bf Step 5: We show that $g(W)=W$ and the pared manifold $(W, W \cap (P \cup \sigma))$ is simple.}
\bigskip

Recall that $g$ fixes each vertex and leaves each edge setwise invariant.  Also, $g(N(\Gamma))=N(\Gamma)$, $g(P)=P$, and $g(\sigma)=\sigma$.  Now, since the sets of balls $\{U_{e_1}, \dots, U_{e_n}\}$ and $\{V_{F_1}, \dots, V_{F_m}\}$ were chosen to be outermost, each of these sets is also invariant under $g$.  Furthermore, we know from Step 4 that each $U_{e_j}$ intersects $\Gamma- N(V)$ only in $e_j$.  Now since $g$ leaves each $e_j$ setwise invariant, $g$ must also leave each $U_{e_j}$ setwise invariant.  It follows that each $A_{e_j}$ must also be setwise invariant under $g$; and since each vertex is fixed by $g$, each boundary component of $A_{e_j}$ is also setwise invariant under $g$.  

Now let $v$ be a vertex such that some $F_i$ has both its boundary components in $\partial N(v)$.  Since $F_i$ is incompressible in $X$, the components of $\partial F_i$ bound disjoint disks in $\partial N(v)$ which each intersect at least one edge of $\Gamma$.  Since every vertex and edge of $\Gamma$ is invariant under $g$, it follows that $F_i$ and its boundary components are also setwise invariant under $g$.  Thus $V_{F_i}$ is setwise invariant under $g$.  Finally, since all of the $U_{e_j}$ and $V_{F_i}$ are setwise invariant under $g$, we know that $W$ must be setwise invariant under $g$ as well.

  To show that the pared manifold $(W, W \cap (P \cup \sigma))$ is simple, first recall that $W$ is the closure of a single component of $X-\sigma$.  Hence by JSJ for pared manifolds \cite{JS, Jo}, $(W, W \cap (P \cup \sigma))$ is either $I$-fibered, Seifert fibered, or simple as a pared manifold.  We see that  $(W, W \cap (P \cup \sigma))$ cannot be Seifert fibered or $I$-fibered as follows.

First observe that for every vertex $v$, there is some edge $e$ such that the ball $U_e$ meets $\partial N(v)$. It follows that $\partial W$ meets every component of $\partial N(V)$.  Furthermore, since every vertex $v$ has valence at least three, $\partial' N(v)$ is a sphere with at least three holes.  Also each $U_{e_j}$ contains at most one boundary component of $\partial' N(v)$, and each $V_{F_i}$ contains no boundary components of  $\partial' N(v)$.  Note that if $F_i$ has its boundaries in $\partial N(v)$, then $V_{F_i}$ separates $\partial'(N(v))$ into two components.

This means that for each vertex $v$, $W\cap \partial N(v)$ is obtained from $\partial'N(v)$ by deleting some (possibly zero) number of annuli.   Since $\partial' N(v)$ is a sphere with at least three holes, deleting a collection of annuli may create new components, but some component must still be a sphere with at least three holes.  It follows that the component of $\partial W$ meeting $\partial N(\Gamma)$ has genus more than one, and thus the pared manifold $(W, W \cap (P \cup \sigma))$ cannot be Seifert fibered.

Next, suppose for the sake of contradiction that the pared manifold $(W, W \cap (P \cup \sigma))$ is $I$-fibered.  By definition of $I$-fibered for pared manifolds, this means that there is an 
$I$-bundle map of $W$ over a base surface $Y$ such 
that $W \cap (P \cup \sigma)$ is in the pre-image of $\partial 
Y$.  It follows that $Y$ must be homeomorphic to a component of  $\partial 'N(V)\cap W$, and hence must be a sphere with holes.  In particular, the base surface $Y$ is orientable.

Now since $W$ is an orientable $3$-manifold which is $I$-fibered over an orientable surface, it follows that $W$ must actually be a product $Y\times I$.  Thus
$W\cap (P\cup \sigma )=\partial Y \times I$, and $Y_{0}=Y\times \lbrace 
0\rbrace $ and $Y_{1}=Y\times \lbrace 1\rbrace $ are the only components of $\partial' N(V)\cap W$.  However, since $\partial W$ meets every component of $\partial N(V)$, this means that $\Gamma$ contains at most two vertices.  But this contradicts our hypothesis that $\Gamma$ is 3-connected.  Therefore, the pared $(W, W \cap (P \cup \sigma))$ is not $I$-fibered.  Since it is also not Seifert fibered, it must be simple.   
 
 \bigskip
 
 \noindent{\bf Step 6: We prove that $g|W$ is isotopic to an orientation reversing involution $h$ of $(W, W\cap (P\cup \sigma ))$.}

\bigskip

 Now it follows from Thurston's Hyperbolization
Theorem for Pared Manifolds \cite{Th} applied to the simple pared manifold $(W, W\cap (P\cup
\sigma ))$ that $W-(W\cap (P\cup \sigma ))$ admits a finite volume
complete hyperbolic metric with totally geodesic boundary.  Let $D$
denote the double of $W-(W\cap (P\cup \sigma ))$ along its boundary.
Then $D$ is a finite volume hyperbolic manifold, and $g|W$ can be doubled to obtain an orientation reversing  homeomorphism of $D$ (which we still call $g$) taking each copy of $W-(W\cap (P\cup \sigma ))$ to itself.
Now by Mostow's Rigidity Theorem~\cite{Mo} applied to $D$, the homeomorphism $g:D\to D$ is homotopic to an orientation reversing finite order isometry $h:D\to D$
that restricts to an isometry of $W- (W\cap(P\cup \sigma ))$. By removing
horocyclic neighborhoods of the cusps of $W-(W\cap(P\cup \sigma ))$,
we obtain a copy of the pair $(W, W\cap (P\cup \sigma ))$ which is
contained in $W-(W\cap(P\cup \sigma))$ and is setwise invariant under $h$.  We abuse notation and now consider $h$ to
be an orientation reversing finite order isometry of $(W, W\cap (P\cup \sigma ))$ instead of a copy of $(W, W\cap (P\cup \sigma ))$.  Furthermore, $h$ induces isometries on the collection of tori and annuli in
$W\cap (P\cup \sigma )$ with respect to a flat metric.  Furthermore, the sets
$\partial' N(V)\cap W$, $\partial'N(E)\cap W$, and $\tau \cap W$ are
each setwise invariant under $h$.  Finally, it follows from
Waldhausen's Isotopy Theorem~\cite{Wa} that $h$ is actually
isotopic to $g|W$ by an isotopy leaving
$W\cap (P\cup \sigma )$ setwise invariant.

 Now, recall that the boundary components of $W$ consist of tori in $\tau$, and the union of spheres with holes in $\partial' N(V)\cap W$ together with annuli in $P\cup \sigma$. Recall from the first paragraph in Step 5 that $g$ setwise fixes each annulus $A_{e_j}\subseteq \partial U_{e_j}$ with boundaries in distinct components of $\partial N(V)$, each annulus $F_i\subseteq \partial V_{F_i}$ with both boundaries in a single component of $\partial N(V)$, each component of $\partial A_{e_j}$, and each component of $\partial F_i$.  Since $h$ is
isotopic to $g|W$ by an isotopy leaving
$W\cap (P\cup \sigma )$ setwise invariant, $h$ leaves invariant the same sets as $g$.  It follows that for each vertex $v$, we have $h(\partial' N(v))\cap W)=\partial' N(v)\cap W$, and $h$ takes
each component of $W\cap \partial N(v)$ to itself, leaving each boundary component setwise invariant.  

Since $h$ has finite order, $h$ restricts to a finite order homeomorphism of every component of $W\cap \partial N(V)$.  We saw in Step 5 that for every vertex $v$, at least one component $C_v$ of $W\cap \partial N(v)$ is a sphere with at least three holes.  Since $h$ restricts to a finite order homeomorphism of $C_v$ taking each boundary component of $ C_v$ to itself, $h$ must be a reflection of $C_v$ which also reflects each component of $\partial C_v$. Now $h^2$ is a finite order, orientation preserving isometry of $W$ that pointwise fixes the surface $C_v$.  It follows that $h^2$ is the identity, and hence $h$ is an involution of $W$.  

\bigskip
 
 \noindent{\bf Step 7: We extend $h$ to an orientation reversing involution of $X\cup N(\Gamma)$ which pointwise fixes an embedding $\Gamma'$ of $\gamma$.}

\bigskip

Observe that since every annulus in $P\cup \sigma$ is incompressible in $W$, no component of $W\cap \partial N(V)$ can be a disk.  Thus every component of $W\cap \partial N(V)$ is a sphere with two or more holes.  

As we saw in Step~6, for each vertex $v$, $h$ reflects some component $C_v$ of $W\cap \partial N(v)$ which is a sphere with at least three holes, and $h$ reflects every component of $\partial C_v$.  Let $b_0$ denote some boundary component of $C_v$.   Then $b_0$ is also a boundary component of either an annulus $A_{e_j}$ or an annulus $F_i$.  Since $h$ reflects $b_0$, we know that $h$ must also reflect the annulus $A_{e_j}$ or $F_i$, whichever contains $b_0$ in its boundary.  Since the boundaries of the annulus are not interchanged, $h$ must also reflect each boundary component of $A_{e_j}$ or $F_i$.  Below we extend $h$ to $U_{e_j}$ or $V_{F_i}$.

First we consider the case where $b_0$ is in the boundary of an annulus $A_{e_j}\subseteq \partial U_{e_j}$.   Let $D_j$ and $D'_j$ denote the disks whose union is in $\mathrm{cl}(\partial U_{e_j}-A_{e_j})$.  Then $D_j$ and $D_j'$ each meet $\Gamma$ in a single point of $e_j$. Since $h$ reflects the annulus $A_{e_j}$ together with each boundary component of $A_{e_j}$, we can extend $h$ radially to the disks $D_j$ and $D_j'$ to get a reflection of the sphere $A_{e_j}\cup D_j\cup D_j'$ pointwise fixing a circle containing the points $D_j\cap e_j$ and $D_j'\cap e_j$.  Recall that the sphere $A_{e_j}\cup D_j\cup D_j'$ bounds the ball $U_{e_j}$ in $M$.  Now, we  express $U_{e_j}$ as a product $D_j\times I$ whose core $\overline{e_j}$ has endpoints $D_j\cap e_j$ and $D_j'\cap e_j$  (see Figure~\ref{knotted}).  Then we extend $h$ from a reflection of the sphere $A_{e_j}\cup D_j\cup D_j'$ to a reflection of the product $D_j\times I$ which pointwise fixes the core $\overline{e_j}$.

\begin{figure}[here]
\begin{center}
\includegraphics[width=.55\textwidth]{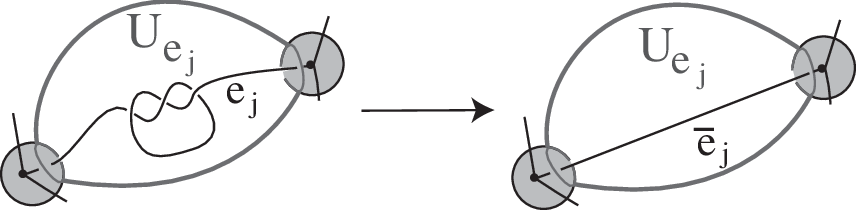}
\caption{We can think of $U_{e_j}$ as a product $D_j\times I$ with core $\overline{e_j}$.}
\label{knotted}
\end{center}
\end{figure}

  Next we consider the case where $b_0$ is a boundary component of an annulus $F_i\subseteq \partial V_{F_i}$ which has both boundaries in a single $\partial N(v)$.  Recall that $\mathrm{cl}(\partial V_{F_i}-F_i)$ consists of disks $D_i$ and $D_i'$ properly embedded in $N(v)$.  Without loss of generality, $D_i$ and $D_i'$ each meet $\Gamma$ at a single point on an edge.  Since $h$ reflects the annulus $F_i$ together with each of its boundary components, we can extend $h$ radially to the disks $D_i$ and $D_i'$ to get a reflection of the sphere $F_i\cup D_i\cup D_i'$ pointwise fixing a circle containing the points $\Gamma\cap D_i$ and $\Gamma\cap D_i'$.  Thus we can extend $h$ to a reflection of the ball $V_{F_i}$ which pointwise fixes a disk containing the segment $V_{F_i}\cap \Gamma$ (see Figure~\ref{Bi}).
    
\begin{figure}[here]
\begin{center}
\includegraphics[width=.4\textwidth]{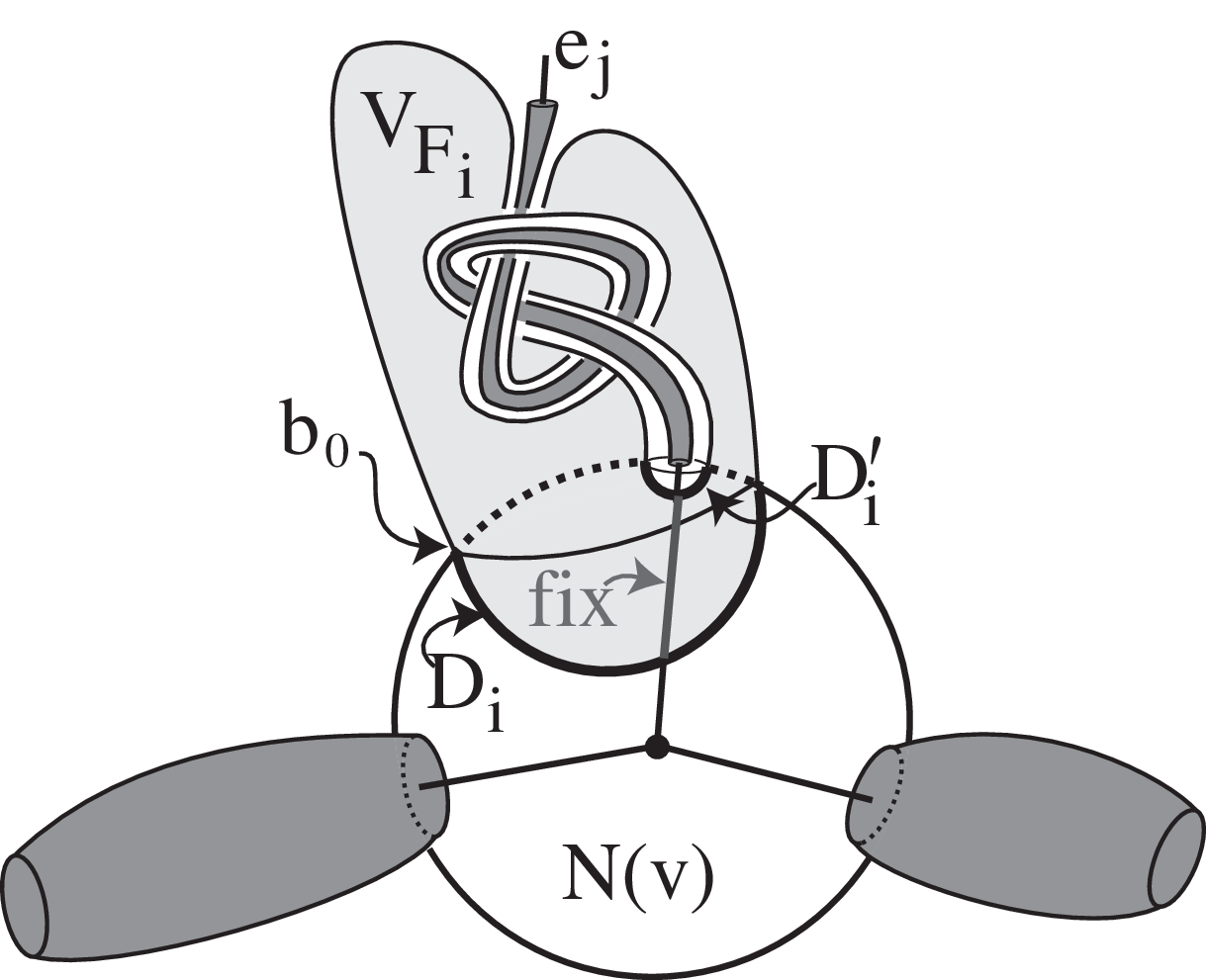}
\caption{We extend $h$ to a reflection of the ball $V_{F_i}$ which pointwise fixes $\Gamma\cap V_{F_i}$.  }
\label{Bi}
\end{center}
\end{figure}

In either case, the extension of $h$ reflects the sphere with holes $C_v$, and one of the balls $U_{e_j}$ or $V_{F_i}$ depending on whether $b_0$ is a boundary component of $A_{e_j}$ or $F_i$, respectively.  Next we let $S_1$ denote the union of $C_v$ together with the annulus $A_{e_j}$ or $F_i$ glued along $b_0$.  Now $h$ reflects $S_1$ taking every boundary component of $S_1$ to itself, and hence reflecting every boundary component of $S_1$.  Let $b_1$ be a boundary component of $S_1$. If $b_1$ is not the other boundary of the annulus $A_{e_j}$ or $F_i$, then we repeat the above argument with $b_1$ in place of $b_0$.

\begin{figure}[here]
\begin{center}
\includegraphics[width=.45\textwidth]{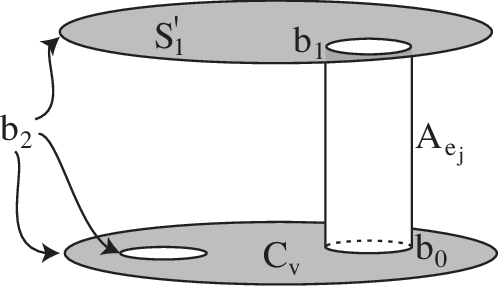}
\caption{In this illustration, we have three choices for the boundary component $b_2$ of $S_2=C_v\cup A_{e_j} \cup S_1'$.  }
\label{induction}
\end{center} 
\end{figure}

 If $b_1$ is the other boundary of the annulus $A_{e_j}$ or $F_i$, then $b_1$ is also a boundary of some other component $S_1'$ of $W\cap \partial N(V)$, as illustrated in Figure~\ref{induction}.  In this case, since $b_1$ is reflected by $h$ and every boundary component of $S_1'$ is setwise invariant under $h$, we know that $h$ must reflect $S_2=S_1\cup S_1'$.  Now let $b_2$ denote a boundary component of $S_2$, and repeat the above argument with $b_2$ in place of $b_0$.  

In general, for a given surface $S_n$ obtained in this way, the surface $S_{n+1}$ is the union of $S_n$ together with either an annulus of the form $A_{e_j}$ or $F_i$ or a sphere with at least two holes contained in $W\cap \partial N(V)$.  Furthermore, $S_{n+1}$ is reflected by $h$. This process will only stop when the surface that we obtain has no boundary components.  Since $\partial W$ has only one component which intersects $\partial N(V)$, the closed surface that we obtain in this way must be this component.  Thus we have extended $h$ to an orientation reversing involution of each of the balls $U_{e_1}$, \dots, $U_{e_n}$, $V_{F_1}$, \dots, $V_{F_m}$.

Since $N(E)\cup(X-W)\subseteq U_{e_1}\cup\dots U_{e_n}$, at this point we have defined $h$ on $X$, and the only part of $N(\Gamma)$ on which we have not defined $h$ is $N=N(V)-(V_{F_1}\cup\dots\cup V_{F_m})$.  Then $N$ is a collection of disjoint balls (for example in Figure \ref{Bi}, $N(v)-V_{F_i}$ is two balls one of which contains $v$).  Also, $h$ is a reflection of each component of $\partial N$ that fixes each point in $\partial N \cap \Gamma$. 
 Now we extend $h$ radially to a reflection of each ball of $N$ in such a way that $h$ pointwise fixes each component of $N \cap \Gamma$.  Thus $h$ is a reflection of each component of $N(V)$ which pointwise fixes $N(V)\cap \Gamma$.

We have now extended $h$ to an orientation reversing involution of the manifold

$$Y=W\cup V_{F_1}\cup\dots\cup V_{F_m}\cup U_{e_1}\cup\dots\cup U_{e_n}\cup N.$$  

Recall from the end of Step 4 that $\partial X$ and $\partial W$ have the same collection of tori in their boundary components.  Furthermore, we have filled in the boundary component of $W$ meeting $\partial N(\Gamma)$ with a collection of balls in $X\cup N(\Gamma)$.  Thus in fact $Y=X\cup N(\Gamma)$ and $\partial Y$ is the collection of tori in $\partial X$.

Finally, we define a new embedding $\Gamma'$ of $\gamma$ in $X\cup N(\Gamma)$ as follows. Let $\Gamma'\cap N(V)=\Gamma \cap N(V)$.  Then for each edge $e_j$ define an embedding of $e_j-N(V)$ in $\Gamma'$ as the core $\overline{e_j}$ of $U_{e_j}=D_j\times I$, which we know is pointwise fixed by $h$ according to the way we extended $h$ to $U_{e_j}$ (recall Figure~\ref{knotted}).  
\bigskip

 \noindent{\bf Step 8:  We prove that if an essential curve in a component of $\partial (X\cup N(\Gamma))$ compresses in $M$, then it compresses in $X\cup N(\Gamma)$.}
\bigskip

 Let $T_1$,\dots, $T_r$ be the components of $\partial (X\cup N(\Gamma))$.  Then each $T_i$ is contained in the characteristic family $\tau$, and hence is incompressible in $\mathrm{cl}(M-N(\Gamma))$.   
 
 Suppose that an essential curve $\lambda_i$ on some $T_i$ compresses in $M$.   Let $D_i$ be a compressing disk for $\lambda_i$ whose intersection with the set of tori $\{T_1,\dots, T_r\}$ is minimal.  Let $D=D_i$ if the interior of $D_i$ is disjoint from $T_i$.  Otherwise, there exists some $D$ in the interior of $D_i$ such that $D$ is a compressing disk for $T_i$ whose interior is disjoint from $T_i$.  In either case, the intersection of $D$ with $\{T_1,\dots, T_r\}$ is minimal.  
 
 Suppose that $D$ contains at least one curve of intersection in its interior.  Hence there is an innermost disk $\Delta$ on $D$ which is a compressing disk for some $T_j$ with $j\not=i$.  Since $T_j$ compresses in $M$ but is incompressible in $\mathrm{cl}(M-N(\Gamma))$, we know that $\Delta$ intersects $\Gamma$.

 Since $M$ is irreducible, any compressible torus is separating in $M$.  Thus we can let $X_j$ denote the closed up component of $M-T_j$ containing $X$ and let $V_j$ denote the closed up component of $M-T_j$ whose interior is disjoint from $X$.  Now let $S$ denote the region of $D$ which is adjacent to the innermost disk $\Delta$.  Then $S\subseteq V_j$, since $\Delta\subseteq X_j$.  Also, $\partial D\subseteq T_i\subseteq X\subseteq X_j$ implies that $S$ is adjacent to another region of $D$ which is contained in $X_j$.  In particular, there must be another circle of intersection $\alpha$ of $D\cap T_j$ which bounds a disk $\overline{D}\subseteq D$ such that $\overline{D}$ contains $\Delta\cup S$.   We illustrate the abstract disk $D$ and its intersections with $T_j$ in Figure~\ref{disk}.  The white regions in the figure are contained in $V_j$, and the grey regions are contained in $X_j$.  Note we do not illustrate any circles of intersection of $D$ with any $T_k$ with $k\not =j$.

\begin{figure}[here]
\begin{center}
\includegraphics[width=.3\textwidth]{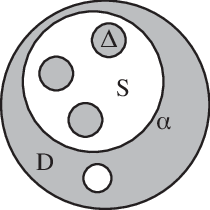}
\caption{A picture of the abstract disk $D$ and its circles of intersection with  $T_j$.  }
\label{disk}
\end{center}
\end{figure}

Now, since the intersection of $D$ with the tori $T_1$,\dots, $T_r$ is minimal, all of the curves of intersection of $D\cap T_j$ must be essential on $T_j$.  In particular, $\partial \Delta$ and $\alpha$ must both be essential on $T_j$.  Since there cannot be two essential, disjoint, non-parallel curves on a torus, this means that $\alpha$ is parallel to $\partial \Delta$ on $T_j$.  It follows that $\alpha$ must bound a disk $\overline{\Delta}$ which is parallel to $\Delta$ in $M$.  In particular, since the interior of $\Delta$ is disjoint from $T_1$,\dots, $T_r$, the interior of $\overline{\Delta}$ is as well.  But now by replacing the disk $\overline{D}$ with the disk $\overline{\Delta}$ in the compressing disk $D$ we obtain a new compressing disk $D'$ for $T_i$ which has fewer curves of intersection with $T_1$,\dots, $T_r$ than $D$ has.  From this contradiction we conclude that the interior of $D$ must be disjoint from $T_1\cup\dots\cup T_r$, and hence $D\subseteq X\cup N(\Gamma)$.

If $\partial D=\lambda_i$, then $\lambda_i$ compresses in $X\cup N(\Gamma)$ as required.  Otherwise, the compression disk $D$ was contained in the interior of the original disk $D_i$ and $\partial D\subseteq T_i$.  In this case, since the intersection of $D_i$ with the set of tori $\{T_1,\dots, T_r\}$ was minimal, $\partial D$ is essential in $T_i$.  But now $\partial D$ and $\lambda_i$ are disjoint essential curves on $T_i$.  Hence as we saw above, the disks $D$ and $D_i$ must be parallel in $M$.  Now, since $D\subseteq X\cup N(\Gamma)$, it follows that $\lambda_i$ must compress in $X\cup N(\Gamma)$ as well.

\bigskip

 \noindent{\bf Step 9: We fill each component $T_i$ of $\partial (X\cup N(\Gamma))$ with a solid torus such that $h$ extends to an involution of the resulting manifold $M'$, and $M'$ satisfies the condition below.}
\bigskip

\noindent {\bf Condition:} {\it  If $T_i$ is compressible in $M$, then every element of $H_1 (T_i,\mathbb{Z}_2)$ is trivial in $H_1 (M',\mathbb{Z}_2)$, and if $T_i$ is incompressible in $M$ then at least one non-trivial element of $H_1 (T_i,\mathbb{Z}_2)$ is trivial in $H_1 (M',\mathbb{Z}_2)$.}  
\bigskip

Let $T_i$ be a component of $\partial (X\cup N(\Gamma))$.  By Corollary~\ref{cor:hlhd} at the beginning of Section 4 there is a curve $\mu_i$ on $T_i$ which is non-trivial in $H_1( X\cup N(\Gamma),\mathbb{Z}_2)$.  Also, we know from Step 8 that if some essential curve $\lambda_i$ on $T_i$ compresses in $M$, then $\lambda_i$ also compresses in $X\cup N(\Gamma)$.  In particular, $\lambda_i$ is not homologous in $T_i$ to $\mu_i$.

Now suppose that for some $j\not =i$, the involution $h$ interchanges $T_i$ and $T_j$.  Since $h:X\cup N(\Gamma)\to X\cup N(\Gamma)$ is a homeomorphism and $\mu_i$ is a curve on $T_i$ which is non-trivial in $H_1( X\cup N(\Gamma),\mathbb{Z}_2)$, we know that $h(\mu_i)$ is a curve on $T_j$ which is also non-trivial in $H_1(X\cup N(\Gamma),\mathbb{Z}_2)$.  Now we fill $X\cup N(\Gamma)$ along $T_i$ by adding a solid torus $V_i$ with its meridian attached to the non-trivial curve $\mu_i$, and we fill along $T_j$ by adding a solid torus $V_j$ with its meridian attached to $h(\mu_i)$.  Then we extend the involution $h$ radially in $V_i\cup V_j$ (abusing notation and still calling the involution $h$).   We repeat this process for every boundary component of $X\cup N(\Gamma)$ which is not setwise invariant under $h$.   

Thus    for every $T_i$ along which we have glued a solid torus $V_i$, the curve $\mu_i$ on $T_i$ is now trivial in $H_1 (X\cup N(\Gamma)\cup V_i,\mathbb{Z}_2)$.  Furthermore, if $T_i$ is compressible in $M$, then there is an essential curve $\lambda_i$ on $T_i$ which compresses in $X\cup N(\Gamma)$.  Hence, $\lambda_i$ is not homologous to $\mu_i$ in $H_1 (T_i,\mathbb{Z}_2)$, and together they generate $H_1 (T_i,\mathbb{Z}_2)$.  Furthermore, both $\lambda_i$ and $\mu_i$ are trivial in $H_1 (X\cup N(\Gamma)\cup V_i,\mathbb{Z}_2)$.

Let $Z$ be the manifold that we have obtained by filling all of the boundary components of $X\cup N(\Gamma)$ which are not setwise fixed by $h$, and let $T_i$ be a component of $\partial Z$.  Recall from Step 7 that $\Gamma'$ is an embedding of $\gamma$ in $X\cup N(\Gamma)$ which is pointwise fixed by $h$.   Now $h:Z\to Z$ is an orientation reversing involution pointwise fixing $\Gamma'$, and $T_i$ is setwise invariant under $h$.  Furthermore, by the proof of Step~6, $h|T_i$ is an isometry with respect to a flat metric.  Since $h$ is an orientation reversing involution of $Z$ and $T_i$ is a boundary component of $Z$, $h|T_i$ is also an orientation reversing involution. Thus $h|T_i$ is either a reflection pointwise fixing two parallel circles on $T_i$, or a composition of a reflection and an order $2$ rotation of $T_i$. In either case, for any non-trivial element $a_i\in H_1( T_i,\mathbb{Z})$, there is a non-trivial curve $b_i\in H_1( T_i,\mathbb{Z})$ such that $a_i$ and $b_i$ meet transversely in a single point and $h(b_i)$ is homologous to $\pm b_i$ in $H_1( T_i,\mathbb{Z})$.


Now suppose that some essential curve $\lambda_i$ on $T_i$ compresses in $M$.  Then by Step 8, $\lambda_i$ also compresses in $X\cup N(\Gamma)$, and hence in $Z$.  Pick a non-trivial curve $b_i$ on $T_i$ that intersects $\lambda_i$ in a single point such that $h(b_i)$ is homologous to $\pm b_i$ in $H_1( T_i,\mathbb{Z})$.  Then $\langle \lambda_i, b_i\rangle=H_1( T_i,\mathbb{Z})$.  Also, since $\lambda_i$ is null homologous in $H_1( Z,\mathbb{Z}_2)$, by Corollary~\ref{cor:hlhd}, $b_i$ is non-trivial in $H_1( Z,\mathbb{Z}_2)$.  Now we fill $Z$ along $T_i$ by adding a solid torus $V_i$ with its meridian attached to the non-trivial curve $b_i$.  Since $h(b_i)$ is homologous to $ \pm b_i$ in $H_1( T_i,\mathbb{Z})$, we can extend $h$ radially to an orientation reversing involution of the solid torus $V_i$.  Then $h:Z\cup V_i\to Z\cup V_i$ is an orientation reversing involution, and both $\lambda_i$ and $b_i$ are trivial in $H_1( Z\cup V_i,\mathbb{Z}_2)$.  

On the other hand, suppose that some $T_i$ is incompressible in $M$.   By Corollary~\ref{cor:hlhd}, there is some curve $b_i$ on $T_i$ which is non-trivial in $H_1( Z,\mathbb{Z}_2)$, and $b_i$ is homologous to $\pm h(b_i)$ in $H_1( T_i,\mathbb{Z})$.  Now we fill $T_i$ by adding a solid torus $V_i$ with its meridian attached to the curve $b_i$, and then extend $h$ to $V_i$.  Then $h:Z\cup V_i\to Z\cup V_i$ is again an orientation reversing involution, and $b_i$ is trivial in $H_1( Z\cup V_i,\mathbb{Z}_2)$.

In this way, we glue a solid torus to each of the $T_i$ in $\partial(X\cup N(\Gamma))$ to obtain a closed manifold $M'$ satisfying the required condition.  Since $\Gamma'\subseteq X\cup N(\Gamma)$, this gives us an embedding $\Gamma'$ of $\gamma$ in $M'$.  Furthermore, we have extended $h$ to an orientation reversing involution of $(M', \Gamma')$ which pointwise fixes $\Gamma'$.  
\bigskip

To prove the proposition it remains to show that $\mathrm{dim} _{\Z_2} (H_1 (M',\Z_2))\leq n_M.$  We do this in steps 10 and 11.
 
\bigskip
 
 \noindent{\bf Step 10: We prove that at most $N_M$ of the tori in $\partial (X\cup N(\Gamma))$ are incompressible in $M$.}
\bigskip

Recall that $N_M$ is either the number of non-parallel disjoint incompressible tori in $M$ or $2$, whichever is larger.  If no pair of distinct tori of $\partial(X\cup N(\Gamma))$ are parallel in $M$, then at most $N_M$ of the components  of $\partial (X\cup N(\Gamma))$ are incompressible in $M$.

Thus we assume that some pair of distinct tori $T_i$ and  $T_j$ of $\partial (X\cup N(\Gamma))$ are parallel in $M$.  Then $T_i$ and $T_j$ co-bound a region $R$ in $M$ which is homeomorphic to a product of a torus and an interval.  However, since $T_i$ and $T_j$ are in the characteristic family for $\mathrm{cl}(M-N(\Gamma))$, they cannot be parallel in $\mathrm{cl}(M-N(\Gamma))$.  Thus $R$ intersects $\Gamma$.  But since $\partial R=T_i\cup T_j$ and $\Gamma$ is disjoint from $T_i\cup T_j$, this implies that $\Gamma\subseteq R$.  It now follows that $X\cup N(\Gamma)\subseteq R$.

Now suppose that some component $T_k$ of $\partial(X\cup N(\Gamma))$ is incompressible in $M$ and distinct from $T_i$ and $T_j$.  Then  $T_k\subseteq R$.   But since $R\cong T_j\times I$, either $T_k$ is parallel in $R$ to both  $T_i$ and $T_j$, or $T_k$ is compressible in $R$.  However, since $R\subseteq M$, the latter would imply that $T_k$  is compressible in $M$.  

Thus $T_k$ must be parallel to both $T_i$ and $T_j$ in $R\subseteq M$.  Since $T_k$ does not intersect $\Gamma$,  this means that $T_k$ is parallel to one of $T_i$ or $T_j$ in $\mathrm{cl}(M-N(\Gamma))$.  But this is impossible since $T_i$, $T_j$, and $T_k$ are all in the characteristic family for $\mathrm{cl}(M-N(\Gamma))$.   Thus the only components of $\partial (X\cup N(\Gamma))$ which could be incompressible in $M$ are $T_i$ and $T_j$.  Hence $\partial (X\cup N(\Gamma))$ has at most $2\leq N_M$ components which are incompressible in $M$.  

\bigskip
 
 \noindent{\bf Step 11: We prove the inequality $\mathrm{dim} _{\Z_2} (H_1 (M',\Z_2))\leq n_M.$}
\bigskip

Recall that $M'$ is obtained from $X\cup N(\Gamma)$ by adding a collection of solid tori $V_1$, \dots, $V_r$ along the components $T_1$, \dots, $T_r$ of $\partial (X\cup N(\Gamma))$.  Also, by the condition in Step 9, we know that if $T_i$ is compressible in $M$, then every element of $H_1 (T_i,\mathbb{Z}_2)$ is trivial in $H_1 (M',\mathbb{Z}_2)$; and if $T_i$ is incompressible in $M$ then there exists some curve $\beta_i$ on $T_i$ such that $\beta_i$ is non-trivial in $H_1 (T_i,\mathbb{Z}_2)$ but trivial in $H_1 (M',\mathbb{Z}_2)$.  It follows that for every $T_i$ which is incompressible in $M$, there is at most one element of $H_1 (T_i,\mathbb{Z}_2)$ which is non-trivial in $H_1 (M',\mathbb{Z}_2)$.  Let $\alpha_i$ be a representative of this homology class in $H_1 (T_i,\mathbb{Z}_2)$.  Thus every curve in  $H_1 (T_i,\mathbb{Z}_2)$ is either trivial in $H_1 (M',\mathbb{Z}_2)$ or homologous to $\alpha_i$ in $H_1 (M',\Z_2)$.

Now let $\alpha$ be a curve in $M'$ which is non-trivial in $H_1 (M',\Z_2)$, and for each solid torus $V_i$, let $C_i$ denote its core.  Then by general position, we can choose $\alpha$ to be disjoint from $C_1\cup \dots \cup C_r$, and hence disjoint from $V_1\cup \dots\cup V_r$.  Now since $\alpha\subseteq X\cup N(\Gamma)\subseteq M$, we can also consider $\alpha$ in $ H_1 (M,\Z_2)$.

Suppose that $\alpha$ is trivial in $H_1 (M,\Z_2)$. It follows that $\alpha$ is homologous in $ X\cup N(\Gamma)$ to a collection of curves on $T_1\cup \dots \cup T_r$ which are trivial in $H_1 (M,\Z_2)$ but non-trivial in $H_1 (M',\Z_2)$.   Since for any $T_i$ which is compressible in $M$ every element of $H_1 (T_i,\mathbb{Z}_2)$ is trivial in $H_1 (M',\mathbb{Z}_2)$, $\alpha$ must be homologous in $H_1 (M',\Z_2)$ to a collection of curves on only those $T_i$ which are incompressible in $M$.  It now follows that $\alpha$ is homologous in $H_1 (M',\Z_2)$ to a sum of $\alpha_i$'s on $T_i$'s that are incompressible in $M$.  But by Step 10, there are at most $N_M$ such tori $T_i$.  Hence there are at most $N_M$ distinct $\alpha_i$ which are non-trivial in $H_1 (M',\Z_2)$.  It follows that these $N_M$ curves generate every non-trivial element of $H_1 (M',\Z_2)$ which is trivial in $H_1 (M,\Z_2)$. This gives us the required inequality:

  $$\mathrm{dim} _{\Z_2} (H_1 (M',\Z_2))\leq \mathrm{dim} _{\mathbb{Z}_2} (H_1 (M,\mathbb{Z}_2))+N_M=n_M.$$
  \medskip
  
  Hence the proposition follows.\end{proof}

\bigskip\bigskip

\centerline{\textsc{References cited}}

\bigskip\bigskip

\catcode`\@=11
\def\@biblabel#1{\@ifnotempty{#1}{[\bfseries #1]}}

\renewenvironment{thebibliography}[1]{%
   \normalfont
   \labelsep .5em\relax
   \renewcommand\theenumiv{\arabic{enumiv}}\let\p@enumiv\@empty
   \list{\@biblabel{\theenumiv}}{\settowidth\labelwidth{\@biblabel{#1}}%
     \leftmargin\labelwidth \advance\leftmargin\labelsep
     \usecounter{enumiv}}%
   \sloppy \clubpenalty\@M \widowpenalty\clubpenalty
   \sfcode`\.=\@m
}{%
   \def\@noitemerr{\@latex@warning{Empty `thebibliography'  
environment}}%
   \endlist
}

\newcommand{\bibsub}[1]{${}_{\mathbf{#1}}$}

\catcode`\@=12

\end{document}